\pdfoutput=1
    \documentclass{amsart}
    \usepackage{keytheorems}

\usepackage[mathscr]{eucal}
\usepackage{amssymb,amsmath}
\usepackage[utf8]{inputenc}
\usepackage[T1]{fontenc}
\usepackage{lmodern}
\usepackage{amsthm}
\usepackage{comment}
\usepackage{graphicx} 
\usepackage{zref-clever}
\zcsetup{cap,nameinlink}

\usepackage{todonotes}

\usepackage{mathtools}

\providecommand\given{} 
\DeclarePairedDelimiterX\Set[1]\{\}{\renewcommand\given{\SetSymbol[\delimsize]}#1}

\usepackage{tikz-cd}
\usepackage{lmodern}
\usepackage[dvipsnames]{xcolor}

\usepackage{microtype}
\newcommand{\cat}[1]{\mathscr{#1}}
\usetikzlibrary{arrows}
\usepackage[unicode,colorlinks]{hyperref}
\hypersetup{
  linkcolor=BrickRed,
  citecolor=Green,
  filecolor=Mulberry,
  urlcolor=NavyBlue,
  menucolor=BrickRed,
  runcolor=Mulberry,
  linkbordercolor=BrickRed,
  citebordercolor=Green,
  filebordercolor=Mulberry,
  urlbordercolor=NavyBlue,
  menubordercolor=BrickRed,
  runbordercolor=Mulberry
}
\usepackage{etoolbox,mathtools} 
\usepackage{xparse}
\usepackage[all]{xy}
\DeclareMathOperator{\pt}{pt}
\providecommand\given{}
\newcommand\LocSymbol[1][]{\nonscript\:#1\vert\allowbreak\nonscript\:\mathopen{}}
\DeclarePairedDelimiterX\LocArg[1]{(}{)}{\renewcommand\given{\LocSymbol[\delimsize]}#1}
\newcommand{\Loc}[1]{\operatorname{Loc}\LocArg{#1}}

\newcommand{\nc}{\newcommand}
\numberwithin{equation}{section}
\DeclareMathOperator{\im}{im}
\newkeytheorem{Thm}[name=Theorem, sibling=equation, Refname={Theorem,Theorems}]
\newkeytheorem{Lem}[name=Lemma, sibling=Thm, Refname={Lemma,Lemmas}]
\newkeytheorem{Prop}[name=Proposition, sibling=Thm, Refname={Proposition,Propositions}]
\newkeytheorem{Cor}[name=Corollary, sibling=Thm, Refname={Corollary,Corollaries}]
\newkeytheorem{Exa}[name=Example, sibling=Thm, Refname={Example,Examples},style=remark]
\newkeytheorem{Def}[name=Definition, sibling=Thm, Refname={Definition},style=remark]
\newkeytheorem{Rem}[name=Remark, sibling=Thm, Refname={Remark,Remarks},style=remark]
\newkeytheorem{Cons}[name=Construction, sibling=Thm, Refname={Construction},style=remark]
\newkeytheorem{Not}[name=Notation, sibling=Thm, Refname={Notation},style=remark]
\newkeytheorem{Hyp}[name=Hypothesis, sibling=Thm, Refname={Hypothesis},style=remark]
\nc{\dmo}{\DeclareMathOperator}
\dmo{\surj}{surj}
\nc{\bbZ}{\mathbb{Z}}
\nc{\bbN}{\mathbb{N}}

\usepackage[
    backend=biber,
    style=alphabetic,
    maxalphanames=4,
    minalphanames=4,
    maxbibnames=5,
    maxcitenames=5,
    backref=true,
    sortlocale=de_DE,
    natbib=true,
    doi=false,
    isbn=false,
    url=false
]{biblatex}
\DeclareMathOperator{\Spec}{Spec}

\DeclareMathOperator{\Hom}{Hom}

\makeatletter

\newcommand{\bbF}{\mathbb{F}}

\DeclareMathOperator{\Sm}{Sm}
\DeclareMathOperator{\RadId}{RadIdl}
\DeclareMathOperator{\Der}{D}
\DeclareMathOperator{\Idl}{Idl}
\makeatother

\usepackage{dsfont}
\newcommand{\unit}{\mathds{1}}

\usepackage{bbm}
\DeclareMathOperator{\Mod}{Mod}

\newcommand{\ba}[1]{\mathbb{#1}}
\newcommand{\meet}{\land}
\newcommand{\join}{\lor}
\DeclareMathOperator{\At}{At}
\DeclareMathOperator{\Clop}{Clop}
\newcommand{\R}[1]{\mathfrak{R}(#1)}
\newcommand{\Idem}[1]{\mathrm{IdemIdl}(#1)}
\DeclareMathOperator{\Ult}{Ult}
\newcommand{\ideal}[1]{\mathfrak{J}_{#1}}

\newcommand{\BRings}{\mathbf{BoolRings}}
\newcommand{\BAlg}{\mathbf{BoolAlg}}
\DeclareMathOperator{\Spc}{Spc}

\DeclareMathOperator{\colim}{colim}
\zcRefTypeSetup{equation}{
  Name-sg-ab =,
  name-sg-ab =,
  Name-pl-ab =,
  name-pl-ab =,
  abbrev,
  refbounds = {,(,),},
}
\newkeytheoremstyle{mainthmstyle}{
    headformat=\NAME\ \NUMBER\NOTE
}
\newkeytheorem{MainThm}[
    name=Theorem,
    style=mainthmstyle,
    counter-format=\Alph*,
    Refname={Theorem,Theorems}
]
    \title{A non-spatial frame of smashing ideals}
    \author[T.~Barthel]{Tobias Barthel}
    \author[D.~Heard]{Drew Heard}
    \date{\today}
    \makeatletter
\patchcmd{\@setaddresses}{\indent}{\noindent}{}{}
\patchcmd{\@setaddresses}{\indent}{\noindent}{}{}
\patchcmd{\@setaddresses}{\indent}{\noindent}{}{}
\patchcmd{\@setaddresses}{\indent}{\noindent}{}{}
\makeatother

\address{Tobias Barthel, Max Planck Institute for Mathematics, Vivatsgasse 7, 53111 Bonn, Germany}
\email{tbarthel@mpim-bonn.mpg.de}
\urladdr{\href{https://sites.google.com/view/tobiasbarthel/home}{https://sites.google.com/view/tobiasbarthel/home}}
\address{Drew Heard, Department of Mathematical Sciences, Norwegian University of Science and Technology, Trondheim, Norway}
\email{drew.k.heard@ntnu.no}
\urladdr{\href{https://folk.ntnu.no/drewkh/}{https://folk.ntnu.no/drewkh}}
    
\begin{document}
    \begin{abstract}
        We construct commutative rings whose derived categories have non-spatial frames of smashing ideals, thereby answering in the negative questions of Balchin--Stevenson and  Aoki.
    \end{abstract}
    \maketitle
    
    \vspace{-1em}
    \tableofcontents

    \vspace{-1em}
    \section{Introduction}

    Tensor triangular geometry offers a geometric perspective on essentially small tensor triangulated (`tt') categories $\cat K$, by deconstructing $\cat K$ as a sheaf of tt-categories on its Balmer spectrum $\Spc(\cat K)$. The topological space $\Spc(\cat K)$ carries the same information as the frame $\mathrm{Thick}_{\otimes}(\cat K)$ of radical thick tensor-ideals in $\cat K$; via Stone (plus Hochster) duality, this is equivalent to the statement that $\mathrm{Thick}_{\otimes}(\cat K)$ is spatial. 

    Various extensions of the spectrum to the `big' setting of rigidly-compactly generated tt-categories $\cat T$ have been proposed over the years, notably based on the poset of smashing ideals $\Sm(\cat T)$. The theory of finite localizations provides an embedding $\mathrm{Thick}_{\otimes}(\cat T^{\omega}) \hookrightarrow \Sm(\cat T)$ of posets, whose deviation from being surjective measures the failure of the generalized telescope conjecture for $\cat T$. Balmer, Krause, and Stevenson \cite{BalmerKrauseStevenson2020Frame} proved that $\Sm(\cat T)$ forms a frame, leading to the \emph{smashing spectrum} as the space representing $\Sm(\cat T)$ via Stone duality in \cite{BalchinStevenson21pp}. As before, this provides a faithful geometric representation only when $\Sm(\cat T)$ is spatial.
    
    In \cite[page 2]{BalchinStevenson21pp}, Balchin and Stevenson raised the question if $\Sm(\cat T)$ is always spatial. Some earlier attempts to prove spatiality were subsequently found to contain gaps; see \zcref{rem:history} for further details.  More specifically, Aoki \cite[Question 1.6]{aoki2024smashing} asked whether $\Sm(\Der(A))$ is spatial for any commutative ring $A$. The goal of this paper is to answer both questions in the negative.    
    
    \begin{MainThm}\label{mainthm:main}
        The frame of smashing ideals in $\Der(A)$ is in general not spatial. 
    \end{MainThm}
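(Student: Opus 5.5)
The plan is to exhibit a single commutative ring $A$ for which $\Sm(\Der(A))$ can be computed explicitly and turns out to be a frame with \emph{no points at all}, although it is nontrivial. The cleanest non-spatial frames are complete atomless Boolean algebras, such as the algebra $\mathrm{RO}(X)$ of regular open subsets of a space like the Cantor set or $\mathbb{R}$. A Boolean frame is spatial if and only if it is isomorphic to a power set, and hence atomic. So an atomless complete Boolean algebra $\ba B \neq 0$ has no points, since points of a Boolean frame correspond to atoms. I will therefore aim to realize such a $\ba B$ as $\Sm(\Der(A))$, or at least as a closed or open sublocale of it cut out by a smashing ideal. A closed or open piece of a spatial frame is again spatial, so that suffices.

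The first step is to reduce smashing ideals to ring theory. Smashing localizations of $\Der(A)$ correspond to homological ring epimorphisms $A \to B$, or more generally to idempotent $A$-algebras in $\Der(A)$. For rings of small homological complexity, such as von Neumann regular rings or rings built from idempotent ideals, these are controlled by idempotent ideals $I = I^2$ with $\mathrm{Tor}$-vanishing conditions, via the smashing ideal $\Loc{I}$. I would take $A$ to be a ring manufactured from a Boolean algebra: for instance $A = k[\ba B]$, or a non-noetherian ring of functions in which the relevant ideals are generated by idempotents indexed by $\ba B$. The goal is to prove an explicit isomorphism of frames
\[
\Sm(\Der(A)) \;\cong\; \Idem{A} \;\cong\; \ba B' ,
\]
where $\ba B'$ is built from $\ba B$. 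The first isomorphism uses the idempotent-ideal description. The second computes idempotent ideals of the chosen ring via the macros' dictionary of atoms, clopens and ultrafilters $\Ult(\ba B)$.

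The second step is to check the frame-theoretic claims. Joins of smashing ideals go to sums of ideals and finite meets to intersections (products). One must verify that the identification respects infinite joins, using the description of joins from \cite{BalmerKrauseStevenson2020Frame}. Then one shows that the resulting frame, or a quotient of it by a smashing ideal, is a complete atomless Boolean algebra, and concludes non-spatiality by the atom argument above.

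I expect the main obstacle to be the classification step, namely controlling \emph{all} smashing ideals rather than only the finite localizations. Finite localizations alone would give the spatial coherent frame $\mathrm{Thick}_{\otimes}(\Der(A)^{\omega})$, which is dual to $\Spec A$. The non-spatiality must therefore come precisely from the failure of the telescope conjecture, i.e.\ from non-finitely-generated idempotent ideals such as those attached to suprema in $\ba B$ that are not realized by clopens. My first attempt would be to use Bazzoni--Šťovíček-type results identifying smashing localizations with flat ring epimorphisms in the von Neumann regular case. I would then compute these epimorphisms for a carefully chosen regular ring whose lattice of "pure" idempotent ideals is Boolean and complete but atomless. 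If this is hard to force directly, I would fall back on exhibiting a surjection of frames from $\Sm(\Der(A))$ onto such a $\ba B$ given by a closed sublocale.
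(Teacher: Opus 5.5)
Your proposal has a genuine gap: it is a plan rather than a proof. No ring is fixed, and the whole argument rests on an explicit computation $\Sm(\Der(A))\cong\Idem{A}\cong\ba B'$ (or of a closed or open sublocale). That computation is exactly the classification problem you flag as the main obstacle.

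The candidates you suggest also cannot produce non-spatiality.
\begin{itemize}
\item Suppose an ideal $I$ is generated by idempotent \emph{elements}, as in $k[\ba B]$, in rings of locally constant functions, or in any commutative von Neumann regular ring. Then $I$ is a directed union of ideals $Ae$ with $e^2=e$, and the associated smashing ideal $\Loc{Ae \given e\in I,\ e^2=e}$ is compactly generated. So ideals ``attached to suprema in $\ba B$ not realized by clopens'' of this kind never witness failure of the telescope conjecture.
\item For von Neumann regular rings one even has $\Idem{A}=\RadId(A)\cong\mathcal O(\Spec A)$, which is spatial. This is part (3) of the paper's proposition on spatial examples.
\item The identification $\Sm(\Der(A))\cong\Idem{A}$ is false in general: $\Idem{\bbZ}=\{0,\bbZ\}$, while $\Sm(\Der(\bbZ))$ is infinite.
\item Complete atomless Boolean algebras such as $\mathrm{RO}(2^{\bbN})$ typically arise as dense (double-negation) sublocales. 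These are pointless even inside the spatial frame $\mathcal O(2^{\bbN})$. So a frame surjection onto such an algebra proves nothing unless it is genuinely closed or open, and you give no mechanism for that.
\end{itemize}

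The paper avoids classification altogether, using two ideas your proposal lacks.

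First, an injective frame homomorphism into a spatial frame forces the source to be spatial. So it suffices to \emph{embed} a non-spatial frame into $\Sm(\Der(A))$. The paper constructs
\[\Phi_A\colon\Idem{A}\to\Sm(\Der(A)),\qquad I\mapsto{}^{\perp}\{M\mid I\cdot H_*(M)=0\},\]
for \emph{every} idempotent ideal, with no flatness or Tor hypotheses. The steps are:
\begin{itemize}
\item $I^2=I$ makes $\{M\mid I\cdot H_*(M)=0\}$ closed under extensions; it is also closed under sums and products.
\item A product-closed localizing ideal of a rigidly-compactly generated stable $\infty$-category is the category of local objects of a smashing localization.
\item Joins are checked on local objects.
\item Finite meets use the sequence $0\to JN\to N\to N/JN\to 0$, together with the fact that local categories of smashing localizations are detected on homology.
\item Injectivity follows from $I\cdot(A/I)=0$.
\end{itemize}

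Second, the witnessing idempotent ideals must be \emph{nil} rather than generated by idempotents. The nil-Cantor ring $\R{\ba B}$ has basis $e_b$ with $e_be_c=e_{b\vee c}$ if $b\wedge c=0$ and $e_be_c=0$ otherwise. For atomless $\ba B$ it is local with $\mathfrak m_{\ba B}^2=\mathfrak m_{\ba B}$. For every $b\neq 0$, two independent atomless subalgebras of $\ba B_b$ give idempotent ideals $J_b(1),J_b(2)$, both containing $e_b$, with $J_b(1)J_b(2)=0$. Hence every prime element of $\Idem{\R{\ba B}}$ contains all $e_b$, i.e.\ equals $\mathfrak m_{\ba B}$. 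This frame has cardinality at least $\lvert\ba B\rvert$ but exactly one point.

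So non-spatiality comes from too few points, not from a pointless Boolean frame. Your target was therefore stronger than necessary.
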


    The proof of our main theorem consists of two steps: First, reducing the problem to a purely algebraic question about idempotent ideals; and second, constructing a suitable family of commutative rings that satisfy the required properties. 

    The reduction step begins with the observation that the poset of idempotent ideals in a commutative ring $A$ forms a frame $\Idem{A}$, which we wish to relate to the frame of smashing ideals in the derived category $\Der(A)$. Indeed, to any ideal $I$ in $A$, we can assign a full subcategory of $\Der(A)$ on objects $M$ satisfying $I \cdot H_*(M)  = 0$. If $I$ is idempotent, it turns out that this subcategory is the image of a smashing localization functor on $\Der(A)$, whose kernel we denote by $\Phi_A(I)$. This enables us to show (\zcref{thm:frame-embedding}):
    \begin{MainThm}\label{mainthm:smashing}
        For every commutative ring $A$, the assignment
        \[
            \Phi_A\colon \Idem{A}\longrightarrow\Sm(\Der(A)), \quad I \mapsto {}^{\perp}\{M\mid I\cdot H_*(M)=0\}
        \]
        is an injective frame homomorphism.
    \end{MainThm}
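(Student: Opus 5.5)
The plan is to first identify $\Phi_A(I)$ concretely and then check the frame axioms on generators. For an idempotent ideal $I$, I claim $\Phi_A(I)$ is the localizing subcategory $\Loc{A/I\otimes\text{-}}$ generated by the $A$-modules $N$ with $IN=N$. Equivalently, it is the localizing ideal generated by the Koszul-type objects $\mathrm{cofib}(A\xrightarrow{f}A)$ for $f\in I$, suitably completed.

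A cleaner route is the following. Since $I^2=I$, the ring map $A\to A/I$ is a homological epimorphism: $\mathrm{Tor}^A_n(A/I,A/I)=0$ for $n\ge1$ if and only if $I/I^2=0$ and the higher terms vanish. This does not hold automatically for an arbitrary idempotent ideal, so I would instead work directly with the right orthogonal $\mathcal{L}_I=\{M\mid I\cdot H_*(M)=0\}$.

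The first step is to show that $\mathcal{L}_I$ is a localizing and colocalizing tensor ideal. Closure under sums, products and shifts is clear. Closure under cofibers uses the long exact sequence: if $M'\to M\to M''$ has $I$ annihilating $H_*(M')$ and $H_*(M'')$, then $I^2=I$ annihilates $H_*(M)$. Closure under tensoring with arbitrary $X$ follows because $\mathcal{L}_I$ is a triangulated subcategory closed under sums containing $M\otimes A$. For this step I would argue via a flat resolution, or by noting that $M\otimes X$ is a $\Der(A/I)$-module when $M$ is; more precisely, $\mathcal{L}_I$ coincides with the modules on which multiplication by each $f\in I$ is nullhomotopic-on-homology.

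Since $\mathcal{L}_I$ is closed under products, sums and triangles, and $\Der(A)$ is compactly generated, Brown representability or a Bousfield localization result provides a localization whose local objects are $\mathcal{L}_I$. Closure under sums then makes it smashing, with kernel $\Phi_A(I)={}^\perp\mathcal{L}_I$. I expect the main obstacle to lie here: showing that $\mathcal{L}_I$ is the image of a localization, and that the kernel is generated by the $I$-divisible modules $N$ (those with $IN=N$), so that ${}^\perp\mathcal{L}_I=\Loc{N\mid IN=N}$. To do this I would first show that $M\in\mathcal{L}_I$ if and only if $\Hom(N,M)=0$ for all such $N$, using $N=I$ itself: $I$ is divisible because $I^2=I$.

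Next come the frame homomorphism properties. Order preservation is clear. For finite meets, I would show $\Phi(I\cap J)=\Phi(IJ)=\Phi(I)\cap\Phi(J)$. Note that $IJ$ is idempotent and $\sqrt{}$-equal to $I\cap J$; in fact $I\cap J=IJ$, since $I\cap J=(I\cap J)^2\subseteq IJ$. For the meet of smashing ideals, the local category of the meet is generated by $\mathcal{L}_I\cup\mathcal{L}_J$, and $\Phi(I)\cap\Phi(J)=\Phi(I)\otimes\Phi(J)$. Its generators $I\otimes^{\mathbb L}J$ have homology annihilated by $IJ$ in the appropriate sense, which I would check carefully.

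For arbitrary joins, $\Phi(\sum I_\alpha)$ is the localizing ideal generated by $\bigcup\Phi(I_\alpha)$, since joins in $\Sm$ are computed as localizing ideals generated by the union. The needed identity is $\bigcap_\alpha\mathcal{L}_{I_\alpha}=\mathcal{L}_{\sum I_\alpha}$, which is immediate. The top and bottom elements are preserved since $\mathcal{L}_A=0$ and $\mathcal{L}_0=\Der(A)$.

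Finally, for injectivity, I recover $I$ from $\Phi(I)$ as the annihilator in $A$ of $H_0(A/I)$ or, more robustly, $I=\bigcap\{\mathrm{Ann}\,H_*(M)\mid M\in\mathcal{L}_I\}$. Taking $M=A/I$ gives $\supseteq$, and the other inclusion is the definition of $\mathcal{L}_I$. Since $\Phi(I)$ determines $\mathcal{L}_I=\Phi(I)^{\perp}$, it determines $I$.
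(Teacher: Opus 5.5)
Your proposal has a genuine gap at the step you yourself single out: producing a smashing localization whose local objects are $\cat L_I$.

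\textbf{The localization.} In the triangulated setting, closure of $\cat L_I$ under triangles, products and coproducts does not by itself give a left adjoint to $\cat L_I\hookrightarrow\Der(A)$. You would need a dual Brown representability theorem for $\cat L_I$ itself. That is not available, because $\cat L_I$ is not known to be compactly or well generated.

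The concrete route you propose is to show $\cat L_I=\{N\mid IN=N\}^\perp$, so that $\Phi_A(I)=\Loc{N\mid IN=N}$. This is false in general. It would put $N=I$ into $\Phi_A(I)$. Applying $L_I$ to $I\to A\to A/I$ and using $A/I\in\cat L_I$ then gives $L_I(A)\simeq A/I$. Since $L_I$ is smashing, this forces $A/I\otimes^{\mathbb L}_A A/I\simeq A/I$, i.e.\ $A\to A/I$ would be a homological epimorphism. As you note yourself, that is not automatic. Concretely, for an $A/I$-module $M$ one has $\Hom_{\Der(A)}(I,\Sigma^nM)\cong\mathrm{Ext}^{n+1}_A(A/I,M)$ for $n\geq 1$. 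These vanish for all such $M$ only if $A\to A/I$ is a homological epimorphism; this is exactly the gap between $A/I$ and the Hebestreit--Scholze algebra $A/I^\infty$.

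The Koszul description in your first paragraph also fails. For $f\in I$, the map $\Hom(A/\!/f,A/I)\to\Hom(A,A/I)=A/I$ is surjective because $f$ acts by zero on $A/I$. Hence $A/\!/f\notin{}^\perp\cat L_I$ unless $I=A$.

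The paper needs no generators at all. $\cat L_I^\infty$ is closed under all limits and colimits in the presentable stable $\infty$-category $\Der_\infty(A)$. The $\infty$-categorical reflection theorem of Ragimov--Schlank then makes it reflective with a smashing reflector (\zcref{prop:left-adjoint}), and $\Phi_A(I)$ is simply defined as the kernel.

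\textbf{Finite meets.} This step also has a gap. Your claim $I\cap J=IJ$ is false: $I\cap J$ need not be idempotent, so $(I\cap J)^2=I\cap J$ is unjustified. In the paper's own nil-Cantor rings, $e_b\in J_b(1)\cap J_b(2)$ while $J_b(1)J_b(2)=0$ (\zcref{lem:orthogonalideals}), and this is precisely what drives the non-spatiality. The identity to prove is therefore $\Phi_A(IJ)=\Phi_A(I)\cap\Phi_A(J)$. Your sketch via generators $I\otimes^{\mathbb L}J$ rests on the false generator description above and is not carried out.

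The inclusion $\Phi_A(IJ)\subseteq\Phi_A(I)\cap\Phi_A(J)$ is formal from $\cat L_I\cup\cat L_J\subseteq\cat L_{IJ}$. For the reverse inclusion, the missing idea is a d\'evissage:
\begin{enumerate}
\item Put $\cat P=(\Phi_A(I)\cap\Phi_A(J))^\perp$. It contains $\cat L_I$ and $\cat L_J$, and it is closed under products and coproducts because the intersection is smashing.
\item A discrete module $N$ with $IJN=0$ is an extension of $N/JN\in\cat L_J$ by $JN\in\cat L_I$, so $N\in\cat P$.
\item A complex all of whose homology modules lie in a smashing-local subcategory lies in it (\zcref{lem:heart-subcategory}, via Postnikov truncations and completeness of the standard $t$-structure).
\end{enumerate}
Hence $\cat L_{IJ}\subseteq\cat P$, and taking left orthogonals gives the claim.

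Your arguments for joins, the top and bottom elements, and injectivity are fine and agree with the paper. In the injectivity step, taking $M=A/I$ gives $\subseteq$ rather than $\supseteq$, but the conclusion is correct.
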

    
    In order to prove \zcref{mainthm:main}, it therefore suffices to construct a commutative ring whose frame of idempotent ideals is not spatial. To this end, we introduce a family of rings $\R{\ba{B}}$ called \emph{nil-Cantor rings}, taking as input Boolean algebras~$\ba{B}$ and an implicit field $k$. For $\ba{B} = \ba{F}_{[n]}$ a free Boolean algebra on $n$ generators, the nil-Cantor ring is given by the truncated polynomial ring
        \[
            \R{\ba{F}_{[n]}} \cong k[x_1,x_2,\ldots,x_{2^n}]/(x_i^2 \mid 1 \leq i \leq 2^n).
        \]
    Following a coordinate-free approach, we extend the construction of $\R{\ba{B}}$ to a continuous functor on Boolean algebras and injective homomorphisms, by tracking the splitting of atoms under homomorphisms between finite Boolean algebras (\zcref{ssec:construction}). This determines the value $\R{\ba{B}}$ on any Boolean algebra $\ba{B}$ as a suitable directed limit of truncated polynomial rings. 
    
    Of particular interest to us is the case when $\ba{B}$ is atomless, i.e., when $\ba{B}$ arises as the Boolean algebra of clopen subsets of a Stone space without isolated points such as Cantor space, thereby justifying the nomenclature. Our main result about the rings $\R{\ba{B}}$ is the following (\zcref{prop:idempoints}), which finishes the proof of \zcref{mainthm:main}:

    \begin{MainThm}\label{mainthm:idempotent}
        For any atomless Boolean algebra $\ba{B}$, the frame of idempotent ideals in $\R{\ba{B}}$ has cardinality $\geq \lvert\ba{B}\rvert$, but only a single point:
            \[
                \pt(\Idem{\R{\ba{B}}}) = \{\ast\}.
            \]
        In particular, if $\ba{B}$ is nontrivial, then $\Idem{\R{\ba{B}}}$ is not spatial.
    \end{MainThm}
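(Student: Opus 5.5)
The plan is to work with an explicit description of $\R{\ba{B}}$ as the directed colimit, over finite subalgebras $\ba{F}\subseteq\ba{B}$, of the truncated polynomial rings $k[x_a \mid a\in\At(\ba{F})]/(x_a^2)$. Here the transition map for a refinement sends $x_a$ to an expression in the variables of the atoms into which $a$ splits, as in \zcref{ssec:construction}. The argument has three steps:
\begin{enumerate}
\item describe a family of idempotent ideals indexed by $\ba{B}$, which gives the cardinality bound;
\item identify the meets in $\Idem{A}$ with products of ideals;
\item show that a prime element of $\Idem{A}$ is forced to be the maximal ideal.
\end{enumerate}
Recall that a point of a frame is the same thing as a prime element $p\neq 1$: that is, $J\meet K\leq p$ implies $J\leq p$ or $K\leq p$. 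So $\pt(\Idem{A})=\{\ast\}$ amounts to showing there is exactly one such element.

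\emph{Step 1.} Write $A=\R{\ba{B}}$. For $u\in\ba{B}$, let $\ideal{u}\subseteq A$ be the ideal generated by the images of all $x_a$, where $a$ ranges over atoms $a\leq u$ of finite subalgebras containing $u$. Since $\ba{B}$ is atomless, every such $a\neq 0$ splits further. The transition maps should then express $x_a$ as lying in the square of the ideal generated by the variables of the pieces of $a$. This gives $\ideal{u}=\ideal{u}^2$. The pieces of $a$ are still below $u$, so this is the point where atomlessness enters.

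To get injectivity of $u\mapsto\ideal{u}$, and hence $\lvert\Idem{A}\rvert\geq\lvert\ba{B}\rvert$, I would exhibit for $u\not\leq v$ a ring map to a truncated polynomial ring at a finite stage, or a grading by atoms. Such a map should kill $\ideal{v}$ but not $\ideal{u}$. In particular the top ideal $\mathfrak m=\ideal{1}$ is the unique maximal ideal, since $A$ is local with nilpotent generators. It is also idempotent, so every proper idempotent ideal lies in $\mathfrak m$.

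\emph{Step 2.} In $\Idem{A}$ the meet is the product: $J\meet K=JK$. Indeed, $JK$ is idempotent and contained in $J\cap K$. Conversely, any idempotent $L\subseteq J\cap K$ satisfies $L=L^2\subseteq JK$.

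\emph{Step 3.} Let $p\neq A$ be prime, so $p\subseteq\mathfrak m$. The key computation I expect to need is that for disjoint $u,v\in\ba{B}$ the product $\ideal{u}\ideal{v}$ is controlled by the transition maps. Hopefully the relation is exactly of the form that $x_{u\join v}$ at a given stage is built from $x_u$ and $x_v$. Using this, I would show:
\begin{enumerate}
\item the set $F_p=\{u \mid \ideal{u}\subseteq p\}$ is closed under finite joins;
\item for each $u$, primality applied to a suitable product forces $u\in F_p$ or $\neg u\in F_p$.
\end{enumerate}
Then atomlessness should give a contradiction unless $p$ contains every generator. The route is to split an element $u\notin F_p$ into two disjoint pieces, both not in $F_p$, whose product nevertheless lands in $p$. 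Hence $p=\mathfrak m$. Finally, I would check that $\mathfrak m$ really is prime. Every proper idempotent ideal lies in $\mathfrak m$, so $J\meet K\leq\mathfrak m$ with $J,K\not\leq\mathfrak m$ would force $J=K=A$, which is absurd. This gives exactly one point. Non-spatiality follows: a spatial frame with one point has exactly two elements, while $\lvert\Idem{A}\rvert\geq\lvert\ba{B}\rvert>2$ whenever $\ba{B}$ is nontrivial, because a nontrivial atomless Boolean algebra is infinite.

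The main obstacle is Step 3. It requires understanding the products $\ideal{u}\ideal{v}$, and more generally arbitrary idempotent ideals rather than only those of the form $\ideal{u}$, precisely enough to run the splitting argument. I would first try to prove that every idempotent ideal is generated by the $\ideal{u}$ it contains. This should follow from a degree or filtration argument at finite stages: an element outside $\sum\ideal{u}$ has a lowest-degree monomial that cannot lie in $I^2$ at any later stage.
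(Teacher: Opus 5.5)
Your Steps 1 and 2 and your check that $\mathfrak m$ is prime are fine and match the paper. Injectivity needs no ring map: $\ideal{v}$ is spanned by the $e_x$ with $x\meet v\neq0$, so $e_{u\meet\neg v}\in\ideal{u}\setminus\ideal{v}$ when $u\not\le v$.

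The genuine gap is Step 3, and it cannot be closed using the ideals $\ideal{u}$. Let $e_1$ denote the basis vector of the top element $1\in\ba{B}$ (not the unit $e_0$). For nonzero $u,v$ one has $e_1\in\ideal{u}\ideal{v}$:
\begin{itemize}
\item if $u\meet v=0$, write $e_1=e_u e_{\neg u}$;
\item otherwise split $u\meet v=c\join d$ into disjoint nonzero pieces and write $e_1=e_c e_{\neg c}$.
\end{itemize}
Likewise, choosing pairwise disjoint nonzero $c_i\le u_i$ (possible by atomlessness) shows $e_1\in\ideal{u_1}\cdots\ideal{u_n}$. So no product of nonzero $\ideal{u}$'s ever vanishes. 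For an unknown prime $p$ you therefore have no product of them that you know lies in $p$. Your claims that $u\in F_p$ or $\neg u\in F_p$, and that some splitting of $u$ gives a product landing in $p$, have nothing to rest on. In fact, in the subframe of $\Idem{\R{\ba{B}}}$ generated by the $\ideal{u}$, the element $0$ is prime. For it $F_0=\{0\}$, so your dichotomy fails. Hence no argument that tests $p$ only against sums and products of the $\ideal{u}$ can succeed.

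Your proposed remedy is actually false. Suppose every idempotent ideal were generated by the $\ideal{u}$ it contains. Then any two nonzero idempotent ideals would contain some nonzero $\ideal{u}$ and $\ideal{v}$, so their product would contain $e_1$. Thus $0$ would be a second prime element of $\Idem{\R{\ba{B}}}$, contradicting the very statement you are proving.

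The missing idea is a supply of \emph{thin} idempotent ideals with zero product. For $0\neq b\in\ba{B}$, the paper takes two independent atomless subalgebras $\ba{A}_1,\ba{A}_2\subseteq\ba{B}_b$ (\zcref{prop:splittingprinciple}, via a countable atomless subalgebra isomorphic to $\ba{F}_\omega\oplus\ba{F}_\omega$). It lets $J_b(i)$ be spanned by the $e_x$ with $x\ge a$ for some nonzero $a\in\ba{A}_i$. These ideals have three properties:
\begin{itemize}
\item they are idempotent because $\ba{A}_i$ is atomless;
\item both contain $e_b$;
\item $J_b(1)J_b(2)=0$, because $x_1\meet x_2\ge a_1\meet a_2\neq0$ by independence.
\end{itemize}
Primality then forces one of them, hence $e_b$, into $p$ for every $b\neq0$. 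So $p=\mathfrak m$ directly, with no need for $F_p$ or a case analysis on $u$ versus $\neg u$. By the computation above, $J_b(1)$ and $J_b(2)$ cannot both contain a nonzero $\ideal{u}$. They are exactly the ideals that your family, and your remedy, cannot see.
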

    
   Since the spectrum of $\Der(\R{\ba{B}})^{\omega}$ is a singleton, it follows that nil-Cantor rings provide examples which witness an arbitrarily bad failure of the telescope conjecture.
    
    \subsection{Acknowledgements}
    
    Both authors are grateful to the Max Planck Institute for Mathematics, where the results of this paper were found during a visit of the second-named author to the first-named one. We would like to thank Paul Balmer, Frank Gounelas, Henning Krause, and Greg Stevenson for useful comments. TB is supported by the ERC under Horizon Europe~(grant~No.~101042990).
    
    \subsubsection*{Use of generative AI}
    The initial proof strategy emerged from an extended interaction with OpenAI's ChatGPT. In particular, ChatGPT first observed that the results of Hebestreit--Scholze \cite{hebestreit2024note} could be used to construct the frame embedding $\Phi_A$ and identified their binary-tree ring as an example for which $\Idem{A}$ is not spatial. The formulation and generalization in terms of atomless Boolean algebras as well as the present argument were developed by the authors, with further expositional input from ChatGPT. The authors have written the entire paper and take full responsibility for its content.

    \section{The frame of smashing tensor ideals}
    \subsection{Frame theory}
    We begin with some generalities on frame theory.
    \begin{Def}
        A \emph{frame} is a complete lattice $L$, satisfying the distributivity law
        \[
        a \wedge \bigvee_i b_i = \bigvee_i (a \wedge b_i).
        \]
    A \emph{frame homomorphism} is a homomorphism of posets that preserves finite meets (including the top element) and arbitrary joins (including the bottom element). 
    \end{Def}
    \begin{Def}
        A \emph{point} of $L$ is a morphism of frames $p \colon L \to \{ 0 < 1 \}$. A frame is \emph{spatial} if it has enough points, i.e., whenever $a \nleq b$ there exists a point $p$ with $p(a) = 1$ and $p(b) = 0$. 
    \end{Def}
    \begin{Rem}
        The condition of being spatial admits several equivalent formulations. For example, it means that $L \simeq \mathcal{O}(X)$, i.e., $L$ is isomorphic to the frame of open subsets of some topological space $X$. More specifically, we can take $X = \pt(L)$ to be the set of points of $L$, with the topology whose open subsets are 
        \[
      U_a=\{p\in\pt(L)\mid p(a)=1\}
    \]
    for $a \in L$. 
    \end{Rem}
    \begin{Def}
        A proper element $q<1$ of a frame $L$ is called \emph{prime} if
        \[
            a \wedge b\leq q \implies a \leq q \text{ or } b \leq q.
        \]
    \end{Def}
    \begin{Lem}\label{lem:points-prime-elements}
        Points of a frame $L$ are in bijection with its prime elements.
    \end{Lem}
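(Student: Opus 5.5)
The plan is to write down the two maps explicitly and check that they are mutually inverse. Given a point $p\colon L\to\{0<1\}$, set
\[
    q_p \coloneqq \bigvee\{a\in L \mid p(a)=0\}.
\]
Conversely, given a prime element $q$, define $p_q(a)=0$ if $a\le q$ and $p_q(a)=1$ otherwise.

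\textbf{From points to primes.} Since $p$ preserves arbitrary joins, $p(q_p)=\bigvee_{p(a)=0}p(a)=0$. Hence $q_p$ is the largest element sent to $0$, and
\[
    p(a)=0 \iff a\le q_p,
\]
using monotonicity of $p$ for the reverse implication. Because $p(1)=1$, we get $q_p\ne 1$, so $q_p<1$. For primality, suppose $a\wedge b\le q_p$. Then $p(a)\wedge p(b)=p(a\wedge b)=0$ in $\{0<1\}$, so $p(a)=0$ or $p(b)=0$, that is, $a\le q_p$ or $b\le q_p$.

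\textbf{From primes to points.} Let $q$ be prime and check that $p_q$ is a frame homomorphism.
\begin{itemize}
    \item $p_q$ is clearly monotone.
    \item $p_q(1)=1$ because $q<1$, and $p_q(0)=0$ because $0\le q$.
    \item For binary meets, $p_q(a\wedge b)=0$ iff $a\wedge b\le q$. By primality, together with monotonicity for the converse, this holds iff $a\le q$ or $b\le q$, iff $p_q(a)\wedge p_q(b)=0$.
    \item For arbitrary joins, $\bigvee_i a_i\le q$ iff every $a_i\le q$. So $p_q(\bigvee_i a_i)=0$ iff all $p_q(a_i)=0$, iff $\bigvee_i p_q(a_i)=0$. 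The empty join is already covered by $p_q(0)=0$.
\end{itemize}

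\textbf{The maps are mutually inverse.} The characterization $p(a)=0\iff a\le q_p$ says exactly that $p_{q_p}=p$. In the other direction, $q_{p_q}=\bigvee\{a\mid a\le q\}=q$.

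There is no real obstacle here. The only point needing care is the first step: it uses preservation of arbitrary joins, including infinite ones, to get that $q_p$ is itself sent to $0$. Distributivity of $L$ is not used at all.
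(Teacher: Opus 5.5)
Your proof is correct and uses exactly the same pair of maps as the paper: $q \mapsto p_q$ and $p \mapsto q_p = \bigvee\{a \in L \mid p(a)=0\}$. You simply carry out the verification that the paper dismisses as ``straightforward,'' and your remark that distributivity of $L$ plays no role is also accurate.
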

    \begin{proof}
        This is a well-known result. The bijection is given by sending a prime element $q$ to the point $p_q$ defined by
        \[
        p_q(a) = \begin{cases} 
        0 & \text{ if } a \le q \\
        1 & \text{ if } a \not \le q
        \end{cases}
        \]
        and by sending a point $p$ to the element $q_p \coloneqq \bigvee \Set{a \in L \given p(a) = 0}$. It is straightforward to see that these are inverse bijections. 
        \end{proof}
    \begin{Lem}\label{lem:subframe-of-spatial-frame}
        Let $f \colon L \to K$ be an injective frame homomorphism. If $K$ is spatial, then so is $L$. 
    \end{Lem}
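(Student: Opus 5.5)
The plan is to pull points of $K$ back along $f$ and check that they separate elements of $L$.

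First, precomposition sends points of $K$ to points of $L$. If $p\colon K\to\{0<1\}$ is a point of $K$, then $p\circ f\colon L\to\{0<1\}$ is a point of $L$, because a composite of frame homomorphisms is again a frame homomorphism.

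Second, we show that $L$ has enough points. Let $a,b\in L$ with $a\nleq b$. We first claim that $f(a)\nleq f(b)$. Suppose instead that $f(a)\le f(b)$. Then
\[
f(a\wedge b)=f(a)\wedge f(b)=f(a),
\]
since $f$ preserves binary meets. Injectivity of $f$ then gives $a\wedge b=a$, that is, $a\le b$, which is a contradiction. Since $K$ is spatial and $f(a)\nleq f(b)$, there is a point $p$ of $K$ with $p(f(a))=1$ and $p(f(b))=0$. The point $p\circ f$ of $L$ therefore satisfies $(p\circ f)(a)=1$ and $(p\circ f)(b)=0$. This is exactly the spatiality condition for $L$.

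There is no real obstacle. The only point that needs care is that injectivity alone is enough to reflect the order. This works because $f$ preserves meets, so it is an order-embedding.
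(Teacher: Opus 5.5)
Your proof is correct and follows the paper's argument: show that $f$ reflects the order, then pull a separating point of $K$ back along $f$. The only difference is cosmetic. You get order reflection from preservation of binary meets ($f(a\wedge b)=f(a)$), whereas the paper uses binary joins ($f(a\vee b)=f(b)$), and either works.
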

    \begin{proof}
        Note first that $f$ reflects order. Indeed, if $f(a) \le f(b)$, then $f(a \vee b) = f(a) \vee f(b) = f(b)$, and so $a \vee b = b$ since $f$ is injective, i.e., $a \le b$. Therefore if $a \nleq b$ in $L$, then $f(a) \nleq f(b)$ in $K$ as well. Since $K$ is spatial, there exists a point $p \colon K \to \{ 0 < 1 \}$ with $p(f(a)) = 1$ and $p(f(b)) = 0$. The composite $p \circ f$ is then a point of $L$ satisfying $(p \circ f)(a) = 1$ and $(p \circ f)(b) = 0$, as required. 
    \end{proof}
    \subsection{Smashing ideals}
    We now let $\cat T$ be a rigidly-compactly generated tensor-triangulated category. We recall that this means that the subcategory $\cat T^c \subseteq \cat T$ of compact objects coincides with that of rigid (i.e., dualizable)
    objects, and that $\cat T^c$ generates $\cat T$ as a localizing subcategory. 
    \begin{Def}
        A localizing tensor ideal $\cat S \subseteq \cat T$ is called \emph{smashing} if it is the kernel of a Bousfield localization functor $L \colon \cat T \to \cat T$ which preserves coproducts. We write $\Sm(\cat T)$ for the poset\footnote{That this forms a set is a consequence of \cite[Theorem 1.3 and Corollary 2.15]{BalmerKrauseStevenson2020Frame}.} of smashing ideals, ordered by inclusion.
    \end{Def}
    \begin{Rem}\label{rem:smashing-ideals-and-idempotents}
        Smashing ideals can equivalently be described by tensor-idempotent triangles. Indeed, if $\cat S$ is a smashing ideal, then evaluating its localization triangle at the tensor unit gives an idempotent triangle
        \[
            e_{\cat S}\longrightarrow\unit \longrightarrow f_{\cat S}\longrightarrow\Sigma e_{\cat S}
        \]
        such that $e_{\cat S}\otimes e_{\cat S}\simeq e_{\cat S}$ and $f_{\cat S}\otimes f_{\cat S}\simeq f_{\cat S}$. Moreover, the associated localization and colocalization functors are given by
        \[
            L_{\cat S}(-)\simeq f_{\cat S}\otimes-,\qquad \Gamma_{\cat S}(-)\simeq e_{\cat S}\otimes-.
        \]
        Conversely, every idempotent triangle determines a smashing ideal $\ker(f\otimes-)=\im(e\otimes-)$. This gives an inclusion-preserving bijection between smashing ideals and idempotent triangles \cite[Theorems 2.13 and 3.5]{BalmerFavi2011Generalized}.
    \end{Rem}
    The following was proved in \cite[Theorem 5.5]{BalmerKrauseStevenson2020Frame}.
    \begin{Thm}[Balmer--Krause--Stevenson]
    The poset $\Sm(\cat T)$ of smashing ideals, ordered by inclusion, forms a frame with bottom element the zero ideal, and top element $\cat T$. The join of a family $\Set{\cat S_\lambda  \given \lambda \in \Lambda}$ of smashing ideals is $\Loc{\cat S_\lambda \given \lambda \in \Lambda}$, and finite meets are given by intersection. 
    \end{Thm}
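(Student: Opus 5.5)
The plan is to use the description of smashing ideals by idempotent triangles from \zcref{rem:smashing-ideals-and-idempotents} throughout. The first step is to check that $\Sm(\cat T)$ is a complete lattice and to identify its operations.

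\textbf{Bottom and top.} The bottom element is $0$, coming from the triangle $0\to\unit\to\unit$. The top element is $\cat T$, coming from the triangle $\unit\to\unit\to 0$.

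\textbf{Binary meets.} Given smashing ideals $\cat S_1,\cat S_2$ with idempotent triangles $e_i\to\unit\to f_i$, I would show that $e_1\otimes e_2\to\unit$ completes to an idempotent triangle.
\begin{itemize}
\item Tensor idempotence of $e_1\otimes e_2$ is immediate.
\item The cone $f$ fits, via the octahedral axiom, into a triangle $f_1\otimes e_2\to f\to f_2$. From this one checks $e_1\otimes e_2\otimes f=0$, which is the criterion for $e_1\otimes e_2\to\unit$ to be the first map of an idempotent triangle (Balmer--Favi).
\item The associated ideal is $\im(e_1\otimes e_2\otimes -)$. Since $X\in\cat S_i$ if and only if $e_i\otimes X\simeq X$, this image equals $\cat S_1\cap\cat S_2$.
\end{itemize}

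\textbf{Arbitrary joins.} For a family $\cat S_\lambda$, I would set $\cat S=\Loc{\cat S_\lambda\given\lambda\in\Lambda}$, the localizing ideal they generate. I would show it is smashing by producing the localization directly.
\begin{itemize}
\item For finite subfamilies $F$, iterate the construction dual to the one above: the ideal generated by $\cat S_1$ and $\cat S_2$ has $f=f_1\otimes f_2$. This follows by the same argument, since $f_1\otimes f_2$ is idempotent and is killed by exactly $\cat S_1$ and $\cat S_2$ together.
\item The maps $f_F\to f_{F'}$ for $F\subseteq F'$ form a directed system. I would take $f=\mathrm{hocolim}_F f_F$, computed as a homotopy colimit over a directed set (or using an underlying model or $\infty$-categorical enhancement, which I would assume).
\item Since $\otimes$ commutes with homotopy colimits, $f\otimes f\simeq \mathrm{hocolim}\,f_F\otimes f_{F}\simeq f$, so $\unit\to f$ is idempotent.
\item Its kernel contains every $\cat S_\lambda$. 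Conversely, $\ker(f\otimes-)$ is contained in $\cat S$, because $e=\mathrm{fib}(\unit\to f)$ is a homotopy colimit of the $e_F\in\cat S$.
\end{itemize}
This is the main obstacle. Homotopy colimits are not functorial in a bare triangulated category, so one either uses an enhancement or invokes the \cite{BalmerKrauseStevenson2020Frame} argument via the Krause/definable or ideal-lattice description. I would appeal to an enhancement. The alternative is the set-theoretic bound in the footnote together with the fact that a poset with all meets and a top is complete; but we need the join to be $\Loc{-}$ specifically, so the colimit argument is preferable.

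\textbf{Distributivity.} I would then verify $\cat S\cap\bigvee_\lambda\cat S_\lambda=\bigvee_\lambda(\cat S\cap\cat S_\lambda)$. The inclusion $\supseteq$ is clear. For $\subseteq$, take $X$ in the left-hand side. Then $X\simeq e_{\cat S}\otimes X$, and $X\in\Loc{\cat S_\lambda}$. The class of $Y$ with $e_{\cat S}\otimes Y\in\Loc{\cat S\cap\cat S_\lambda\given\lambda}$ is localizing and contains each $\cat S_\lambda$, because $e_{\cat S}\otimes\cat S_\lambda\subseteq\cat S\cap\cat S_\lambda$ (both are ideals). Hence this class contains $X$, so $X\simeq e_{\cat S}\otimes X$ lies in the right-hand side.
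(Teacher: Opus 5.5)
Your arguments for the bottom and top, for binary meets (via $e_1\otimes e_2$ and the octahedron), for finite joins (via $f_1\otimes f_2$ and the dual triangle $e_1\to e\to f_1\otimes e_2$), and for distributivity are correct. The gap is the step you flagged yourself. For an infinite family, $f=\operatorname{hocolim}_F f_F$ over the directed set of finite subsets of $\Lambda$ does not exist in a bare triangulated category. The statement, however, is made for an arbitrary rigidly-compactly generated tt-category, with no enhancement. The paper does not reprove it; it quotes \cite[Theorem~5.5]{BalmerKrauseStevenson2020Frame}, which is proved in that generality by working in the module category $\mathrm{Mod}\text{-}\cat T^c$ rather than with an enhancement.

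As written, then, you prove the theorem only for $\cat T$ underlying a presentably symmetric monoidal stable $\infty$-category. That covers $\Der(A)$, which is all the paper uses, and there your step is fine once stated precisely. Idempotent commutative algebras form a poset, so $F\mapsto f_F$ is a coherent diagram. You then need the specific map $f\otimes\eta\colon f\to f\otimes f$ to be invertible, not merely some equivalence $f\otimes f\simeq f$. This holds because the diagonal is cofinal among pairs $(F,F')$.

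The missing idea, which removes the enhancement, is that you never need to build $f$; it suffices to know that the localization exists. Put $\cat S=\Loc{\cat S_\lambda\given\lambda\in\Lambda}$. Each $\cat S_\lambda=\im(e_\lambda\otimes-)$ equals $\Loc{e_\lambda\otimes g\given g\in\cat G}$ for a set $\cat G$ of compact generators, so $\cat S$ is generated by a set of objects. Neeman's Brown representability for well-generated categories (see \cite{Krause10}) then gives a Bousfield localization $L$ with $\ker L=\cat S$.

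Since the left orthogonal of any object is localizing, $\cat S^\perp=\bigcap_\lambda\cat S_\lambda^\perp$. Each $\cat S_\lambda^\perp=\im(f_\lambda\otimes-)$ is closed under coproducts, hence so is $\cat S^\perp$. A coproduct of localization triangles is therefore again a localization triangle, so $L$ preserves coproducts. Thus $\cat S$ is smashing, and it is visibly the least smashing ideal containing every $\cat S_\lambda$.

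With this in place of your colimit step, the rest of your proof works verbatim for arbitrary $\cat T$, including the distributivity argument (which needs joins to be $\Loc$). The fact that $\Sm(\cat T)$ is a set, needed for completeness, still has to be imported from \cite{BalmerKrauseStevenson2020Frame}, as the paper does.
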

 \begin{Rem}
    In contrast to finite meets, infinite meets in $\Sm(\cat T)$ are not simply given by intersections, since the intersection need not be smashing; see \cite[Remark~5.12]{BalmerKrauseStevenson2020Frame} for an explicit example. The meet is the largest smashing ideal contained in the intersection.
\end{Rem}  
    \begin{Rem}\label{rem:history}
        The appendix to \cite{BalchinStevenson21pp} discusses some of the previous attempts and their failure to answer the question of when the frame $\Sm(\cat T)$ is spatial; we include a brief summary here. In her thesis, Wagstaffe \cite[Theorem 6.2.2]{Wagstaffe2021Definability} gave an argument that the frame $\Sm(\cat T)$ is spatial using techniques from model theory. Unfortunately, this used a lemma of Krause \cite[Lemma 4.11]{Krause2000Smashing} which used an incorrect definition of exact ideal, see \cite[p.~1224]{Krause2005Cohomological}. In fact, \cite[Example A.2.3]{BalchinStevenson21pp} explains why this proof cannot work. An earlier version of \cite{BalchinStevenson21pp} also contained a false proof that $\Sm(\cat T)$ was spatial, and \cite[Example A.2.2]{BalchinStevenson21pp} showed that their argument could not be repaired. 
        
        More recent work \cite{GomezEtAl2026Geometric} introduced the notion of geometric purity in order to study the question of spatiality of the frame of smashing ideals,  however the general question of spatiality has remained open in the rigidly-compactly generated setting. Finally, outside the rigidly-compactly generated setting, Aoki has constructed an example of a category which is not compactly generated whose frame of smashing ideals is not spatial \cite{aoki2024smashing}. 
    \end{Rem}
    \section{Idempotent ideals and smashing localizations}
    \subsection{The frame of idempotent ideals}
    We begin with the frame of idempotent ideals of an ordinary commutative ring $A$.
    \begin{Def}
        Let $\Idem{A}$ denote the poset of idempotent ideals in $A$, i.e., 
        \[
        \Idem{A} \coloneqq \Set{I \triangleleft A \given I^2 = I}. 
        \]
    \end{Def}
    \begin{Lem}\label{lem:idempotent-frame} The poset $\Idem{A}$ is a frame.  Its arbitrary joins and finite meets are given by
    \[ 
    \bigvee_{\alpha} I_\alpha=\sum_\alpha I_\alpha,  \qquad  I\wedge J=IJ. 
    \]
    \end{Lem}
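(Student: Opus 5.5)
We check that the stated formulas produce idempotent ideals, that they are the least upper bound and greatest lower bound in $\Idem{A}$, and then verify distributivity.

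\emph{Joins.} Let $\{I_\alpha\}$ be a family of idempotent ideals and set $S=\sum_\alpha I_\alpha$. Since each $I_\alpha = I_\alpha^2 \subseteq S^2$, we get $S \subseteq S^2$. The reverse inclusion $S^2 \subseteq S$ holds for any ideal, so $S^2=S$. Any ideal containing every $I_\alpha$ contains $S$, and $S$ contains each $I_\alpha$, so $S$ is the join in $\Idem{A}$. The empty join is the zero ideal, which is idempotent. In particular $\Idem{A}$ has all joins and is therefore a complete lattice; the top element is $A$.

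\emph{Binary meets.} For $I,J\in\Idem{A}$, commutativity gives $(IJ)^2 = I^2J^2 = IJ$, and clearly $IJ\subseteq I\cap J$. The key step is that any idempotent $K\subseteq I\cap J$ satisfies $K = K^2 \subseteq IJ$. Hence $IJ$ is the largest idempotent ideal below both $I$ and $J$, so $I\wedge J = IJ$. Note that $I\cap J$ itself need not be idempotent, which is why the meet is the product rather than the intersection. This identification is the only non-obvious point; everything else is bookkeeping.

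\emph{Distributivity.} Using the formulas just established,
\[
I\wedge \bigvee_\alpha J_\alpha = I\cdot \sum_\alpha J_\alpha = \sum_\alpha IJ_\alpha = \bigvee_\alpha (I\wedge J_\alpha).
\]
The middle equality holds because multiplication of ideals distributes over sums: both sides are generated by the products $xy$ with $x\in I$ and $y\in J_\alpha$ for some $\alpha$. This proves that $\Idem{A}$ is a frame.
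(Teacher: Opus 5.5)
Your proposal is correct and matches the paper's proof essentially step for step: idempotence of the sum via $I_\alpha = I_\alpha^2 \subseteq S^2$, the meet identified as $IJ$ via $K = K^2 \subseteq IJ$, and distributivity from $I\sum_\alpha J_\alpha = \sum_\alpha IJ_\alpha$. You also spell out a few points the paper leaves implicit, namely the empty join, that the sum is the least upper bound, and that having all joins makes $\Idem{A}$ a complete lattice.
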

    \begin{proof}
    Let $I=\sum_\alpha I_\alpha$. Since $I_\alpha^2=I_\alpha$, we have $I_\alpha\subseteq I^2$ for every $\alpha$, and hence $I\subseteq I^2$. The reverse inclusion is automatic, so $I$ is idempotent.  Similarly $(IJ)^2 = I^2J^2 = IJ$. Moreover, $IJ\subseteq I\cap J$. If $K\in \Idem{A}$ satisfies $K\subseteq I$ and $K\subseteq J$, then $K=K^2\subseteq IJ$. Thus $IJ$ is the meet of $I$ and $J$ in $\Idem{A}$. Finally,
    \[
    I\left(\sum_\alpha J_\alpha\right)=\sum_\alpha IJ_\alpha,
    \]
    which proves the frame distributivity law.
    \end{proof}
    \begin{Rem}
        Note that the ordinary poset $\Idl(A)$ of ideals in $A$ forms a complete lattice (although not in general a frame), where the meet is given by the intersection of ideals $I \cap J$, not the product $IJ$. In general, $I \cap J$ need not be idempotent, and $\Idem{A} \subseteq \Idl(A)$ is not a sublattice. 
    \end{Rem}

    We now wish to show that $\Idem{A}$ is always a subframe of $\Sm(\Der(A))$. This is perhaps a little surprising, since it is not initially clear how to construct a map between the two. Our approach relies on an $\infty$-categorical reflection principle which does not seem to be available in the purely triangulated setting. 

        \begin{Prop}\label{prop:left-adjoint}
            Let $\cat C$ be a rigidly-compactly generated\footnote{This proposition extends to any presentably symmetric monoidal stable $\infty$-category generated under colimits by a set of dualizable objects, but we will not require this generality here.} symmetric monoidal stable $\infty$-category, and let $\cat J \subseteq\cat C$ be a localizing tensor ideal which is closed under products. 
            Then the inclusion $\iota \colon \cat J\hookrightarrow\cat C$ admits a left adjoint $L$, and the endofunctor $\iota \circ L \colon \cat C \to \cat C$ is a smashing localization.
        \end{Prop}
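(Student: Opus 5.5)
The plan is to get the left adjoint from the adjoint functor theorem for presentable $\infty$-categories, and then prove smashing-ness by identifying $\iota L$ with tensoring by $L\unit$.

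\textbf{Existence of $L$.} Since $\cat J$ is localizing, it is closed under colimits in $\cat C$. By hypothesis it is also closed under limits, because stable subcategories closed under products are closed under all limits: fibers and products suffice. I would first argue that $\cat J$ is presentable. One route: $\cat J$ is a full subcategory of a presentable category, closed under limits and colimits, hence closed under $\kappa$-filtered colimits for all $\kappa$. Accessibility follows from the reflection principle for presentable $\infty$-categories: a full subcategory closed under limits and $\kappa$-filtered colimits for some $\kappa$ is accessible and reflective (HTT 5.5.4.x / Ragimov--Schlank reflection theorem). Given this, $\iota$ preserves limits and $\kappa$-filtered colimits, so the adjoint functor theorem yields a left adjoint $L$. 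This step is the main obstacle, since accessibility of $\cat J$ is not automatic for an arbitrary localizing subcategory. I would cite the $\infty$-categorical reflection principle that the paper alludes to, which says a full subcategory of a presentable $\infty$-category closed under limits and $\kappa$-filtered colimits is presentable.

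\textbf{Smashing.} Set $\eta\colon X\to \iota LX$ for the unit, and write $F=\iota L$. Because $\iota$ also preserves colimits, $F$ preserves colimits: $L$ does as a left adjoint, and so does $\iota$. The claim is that $F(X)\simeq X\otimes F(\unit)$ naturally, and that $\ker F$ is a localizing tensor ideal with this as its localization. For that I need two facts.

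First, $X\otimes F\unit\in\cat J$ for every $X$, since $\cat J$ is a tensor ideal.

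Second, $X\otimes \eta_\unit\colon X\to X\otimes F\unit$ is an $F$-equivalence, meaning $\mathrm{cofib}(X\otimes\eta_\unit)=X\otimes \mathrm{cofib}(\eta_\unit)$ maps trivially into $\cat J$. For dualizable $X$ and $J\in\cat J$, we have
\[\Hom(X\otimes C,J)\simeq \Hom(C,DX\otimes J)=0,\]
using that $DX\otimes J\in\cat J$ and that $C=\mathrm{cofib}(\eta_\unit)$ is left orthogonal to $\cat J$. The objects $X$ with $X\otimes C\in{}^\perp\cat J$ form a localizing subcategory, because ${}^\perp\cat J$ is closed under colimits. Since it contains the compact generators, it is all of $\cat C$.

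Hence $X\otimes F\unit$ is the $\cat J$-localization of $X$, so $F\simeq F\unit\otimes-$. In particular $F$ preserves coproducts, and its kernel $X\otimes C\ldots$, i.e.\ ${}^\perp\cat J=\ker F$, is a localizing tensor ideal. Therefore $\iota L$ is a smashing localization.
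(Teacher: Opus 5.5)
Your proposal is correct. For the existence of $L$ it matches the paper: both observe that $\cat J$ is closed under limits and colimits and then invoke the $\infty$-categorical reflection theorem of Ragimov--Schlank (the analogue of Ad\'amek--Rosick\'y). That theorem says such a subcategory of a presentable $\infty$-category is presentable, hence reflective. The HTT~5.5.4 results you mention are not the right source, since they give reflections onto $S$-local objects for a small set $S$ of maps, which you do not have a priori here.

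The difference is in the smashing step. The paper never writes $\iota L$ as $\iota L\unit\otimes-$. Instead it checks that $\underline{\Hom}_{\cat C}(c,d)\in\cat J$ for all $c\in\cat C$ and $d\in\cat J$, using the localizing subcategory $\cat J_d$. This uses product-closure a second time, because $\underline{\Hom}_{\cat C}(-,d)$ turns coproducts into products. The paper then cites condition~(3) of Ragimov--Schlank's Theorem~7.7 to get a smashing left adjoint in one step.

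You prove $F\simeq F\unit\otimes-$ directly, by showing $X\otimes\operatorname{cofib}(\eta_\unit)\in{}^\perp\cat J$ with a localizing-subcategory argument on the left-orthogonal side. Via $\Hom(X\otimes C,J)\simeq\Hom(C,\underline{\Hom}(X,J))$, your key lemma is the adjoint form of the paper's. Your version needs only that ${}^\perp\cat J$ is closed under colimits and that $\cat J$ is a tensor ideal.

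What this buys: product-closure enters only through the existence of the reflection, and you show by hand that any reflective localizing tensor ideal of a rigidly-compactly generated category has a smashing reflection. The paper's route is shorter but hands this second step to a further cited theorem.
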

        \begin{proof}
            Since $\cat J$ is stable and closed under products, it is automatically closed under all limits (\cite[Proposition~1.4.4.1]{HALurie}), and by definition it is closed under colimits. 

            Fix $d\in\cat J$ and let
            \[
                \cat J_d \coloneqq \Set{c\in\cat C\given\underline{\Hom}_{\cat C}(c,d)\in\cat J}.
            \]
    The subcategory $\cat J_d$ is localizing, because
    $\underline{\Hom}_{\cat C}(-,d)$ is exact and sends coproducts to products. If $g$ is dualizable, then
    \[
        \underline{\Hom}_{\cat C}(g,d)\simeq g^\vee\otimes d\in\cat J,
    \]
    since $\cat J$ is a tensor ideal. Thus $\cat J_d$ contains a set of
    generators of $\cat C$, and hence $\cat J_d=\cat C$.  Therefore $\cat J$ satisfies condition~(3) of \cite[Theorem~7.7]{RagimovSchlank2026Categorical}, so its inclusion admits a smashing left adjoint.
        \end{proof}
        \begin{Not}
            For $A$ a commutative ring, we let $\Der_{\infty}(A)$ denote the symmetric monoidal $\infty$-category whose homotopy category is $\Der(A)$. 
        \end{Not}
        \begin{Def}
            For $I \in \Idem{A}$, set
            \[
            \cat L^{\infty}_I \coloneqq \Set{M \in \Der_{\infty}(A) \given I \cdot H_n(M) = 0 \text{ for every } n \in \bbZ}
            \]
            and set $\cat L_I \coloneqq \operatorname{Ho}(\cat L^{\infty}_I)$. Thus,
            \[
            \cat L_I = \Set{M \in \Der(A) \given I \cdot H_n(M) = 0 \text{ for every } n \in \bbZ}.
            \]
        \end{Def}
\begin{Prop}
    The inclusion $\iota\colon\cat L_I\hookrightarrow\Der(A)$ admits a left adjoint $L_I\colon\Der(A)\to\cat L_I$, and  the endofunctor $\iota \circ L_I \colon \Der(A) \to  \Der(A)$ is a smashing localization.
\end{Prop}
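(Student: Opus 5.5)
The plan is to apply \zcref{prop:left-adjoint} to $\cat C = \Der_\infty(A)$ and $\cat J = \cat L^\infty_I$, and then pass to homotopy categories. $\Der_\infty(A)$ is rigidly-compactly generated, with $A$ as a compact, dualizable generator. So it suffices to check three things: $\cat L^\infty_I$ is a stable full subcategory closed under arbitrary coproducts, it is closed under arbitrary products, and it is a tensor ideal. Once these hold, \zcref{prop:left-adjoint} gives a left adjoint $L$ to the inclusion, with $\iota\circ L$ a smashing localization on $\Der_\infty(A)$. Passing to homotopy categories then gives the statement for $\cat L_I\subseteq \Der(A)$.

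Closure under products and coproducts is immediate. Homology commutes with both in $\Der(A)$, and a product or direct sum of $A$-modules killed by $I$ is again killed by $I$. Closure under shifts is clear.

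Closure under cofibres is the first place idempotence enters. Given a cofibre sequence $M'\to M\to M''$ with $M',M''\in\cat L^\infty_I$, the long exact sequence gives $H_n(M')\to H_n(M)\to H_n(M'')$, exact in the middle. If $x\in H_n(M)$, then $Ix$ maps to zero in $H_n(M'')$, so $Ix$ lies in the image of $H_n(M')$. Hence $I\cdot Ix=0$, and therefore $Ix = I^2x = 0$. Strictly, this argument shows $\cat L^\infty_I$ is extension-closed. Since it is also closed under shifts in both directions, it is closed under cofibres: the cofibre of $M'\to M$ sits in an extension of $\Sigma M'$ by $M$, both of which lie in $\cat L^\infty_I$.

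The tensor-ideal property is the main obstacle. We need that $M\in\cat L^\infty_I$ and $N$ arbitrary imply $M\otimes^{\mathbb L}_A N\in\cat L^\infty_I$. I will argue by a localizing-subcategory argument in the variable $N$. Fix $M\in\cat L^\infty_I$ and set
\[
\cat N_M = \Set{N \given M\otimes N\in\cat L^\infty_I}.
\]
This is localizing, because $\cat L^\infty_I$ is localizing by the previous two steps and $M\otimes-$ is exact and preserves coproducts. It contains $A$, since $M\otimes A\simeq M$. As $A$ generates $\Der_\infty(A)$, we get $\cat N_M=\Der_\infty(A)$. So the tensor-ideal property follows from the earlier steps, and the real content is that idempotence makes the subcategory extension-closed.

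With all hypotheses verified, \zcref{prop:left-adjoint} yields the left adjoint and the smashing property. On homotopy categories, the adjunction $L\dashv\iota$ descends to $L_I\dashv\iota$. The identification $\iota L\simeq f\otimes-$ with $f=\iota L(A)$ also descends, so $\iota\circ L_I$ is a smashing localization on $\Der(A)$.
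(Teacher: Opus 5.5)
Your proposal is correct and follows essentially the same route as the paper. Like the paper, you verify the hypotheses of \zcref{prop:left-adjoint} for $\cat L^\infty_I\subseteq\Der_\infty(A)$: closure under products, coproducts and shifts, and extension-closure via $I^2=I$, with the tensor-ideal property coming from the fact that $A$ generates. You then pass to homotopy categories, spelling out a little more explicitly two points the paper leaves implicit: that shift- and extension-closure give cofibre-closure, and the localizing-subcategory argument showing that every localizing subcategory is a tensor ideal.
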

\begin{proof}
   This will follow from the result at the level of $\infty$-categories by passing to homotopy categories. We therefore verify the conditions of \zcref{prop:left-adjoint} for the inclusion $\cat L_I^{\infty} \subseteq \Der_{\infty}(A)$. We first observe that because $\Der_{\infty}(A)$ is rigidly-compactly generated by its tensor unit $A$, all localizing subcategories are localizing ideals.  Since homology commutes with both products and coproducts, we deduce that $\cat L^{\infty}_I$ is closed under these operations. Similarly, closure under shifts is clear. Suppose that 
            \[
            M' \to M \to M''
            \]
            is a fiber sequence in $\Der_{\infty}(A)$ with $M',M'' \in \cat L^{\infty}_I$. The long exact sequence in homology shows that $I^2 \cdot H_n(M) = 0$ for every $n$. Since $I^2 = I$, it follows that $M \in \cat L^{\infty}_I$, as required. 
\end{proof}
\begin{Def}
    For $I\in\Idem{A}$, let
    \[
        \Phi_A(I)\coloneqq\ker(L_I) \in\Sm(\Der(A)).
    \]
    This defines a map
    \[
        \Phi_A\colon\Idem{A}\longrightarrow\Sm(\Der(A)).
    \]
\end{Def}
\begin{Rem}\label{rem:strictly-localizing}
    The subcategories $\Phi_A(I)$ and $\cat L_I$ determine each other. Indeed, $\Phi_A(I)$ is the kernel and $\cat L_I$ the essential image of the
    corresponding Bousfield localization. Hence passing to right respectively left orthogonals, we have
    \[
        \cat L_I = \Phi_A(I)^\perp \qquad \text{and} \qquad \Phi_A(I) = {}^{\perp} \cat L_I.
    \]
    The second equality uses the standard identity $\cat S={}^{\perp}(\cat S^\perp)$ for a strictly localizing subcategory $\cat S$; see \cite[Proposition~4.9.1]{Krause10}.
\end{Rem}
We will need the following auxiliary result. 
\begin{Lem}\label{lem:heart-subcategory}
       Let $\cat L \subseteq \Der(A)$ be a the full subcategory of local objects for some smashing localization on $\Der(A)$. 
       If $M \in \Der(A)$ satisfies $H_n(M) \in \cat L$ for every $n\in\bbZ$, then $M \in \cat L$.
\end{Lem}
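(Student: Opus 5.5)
The plan is to use the standard facts about local objects of a smashing localization: $\cat L$ is closed under shifts, cones (it is triangulated), arbitrary coproducts (the localization is smashing), and products (it is a right orthogonal $\cat S^\perp$ of the kernel $\cat S$). From these closure properties we build $M$ out of its homology modules, regarded as complexes concentrated in a single degree. Since every complex over $A$ is quasi-isomorphic to one whose truncations are explicit, this is a Postnikov-tower argument.

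\textbf{Step 1 (bounded complexes).} If $M$ has only finitely many nonzero homology groups, we induct on the number of them using the truncation triangles
\[ \tau_{\geq n+1}M \to \tau_{\geq n} M \to \Sigma^n H_n(M) \to \Sigma \tau_{\geq n+1}M .\]
Each $\Sigma^n H_n(M)$ lies in $\cat L$ by hypothesis and shift-closure. The 2-out-of-3 property then gives $M\in\cat L$.

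\textbf{Step 2 (bounded below).} If $M$ is bounded below, say $H_n(M)=0$ for $n<a$, we write $M\simeq \operatorname{hocolim}_k \tau_{\leq k}M$. Each $\tau_{\le k}M$ is bounded, so it lies in $\cat L$ by Step 1. The homotopy colimit is the cone of $1-\mathrm{shift}$ on $\bigoplus_k \tau_{\le k} M$, so closure under coproducts and cones gives $M\in\cat L$. This is where smashing, i.e.\ coproduct-closure of $\cat L$, is used.

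\textbf{Step 3 (general $M$).} We write $M \simeq \operatorname{holim}_k \tau_{\geq -k} M$, the Milnor limit. Concretely, this is the fiber of $1-\mathrm{shift}$ on $\prod_k \tau_{\ge -k}M$, and it is valid in $\Der(A)$ because the Mittag-Leffler condition holds trivially: at any fixed degree the tower of homology groups is eventually constant, so there is no $\lim^1$. Each $\tau_{\ge -k}M$ lies in $\cat L$ by Step 2. Products and fibers preserve $\cat L$ because $\cat L=\cat S^\perp$ is closed under products, as recorded in \zcref{rem:strictly-localizing}. Hence $M\in\cat L$.

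The main obstacle is justifying the two convergence statements, namely that $M$ is the homotopy colimit of its coconnective truncations and the homotopy limit of its connective ones in $\Der(A)$. For the colimit, homology commutes with countable homotopy colimits, so it suffices to compute $H_*$. For the limit, we use the Milnor sequence together with the fact that the tower is eventually constant in each degree. Both are standard facts about the t-structure on $\Der(A)$ (left and right completeness), and we would cite them rather than reprove them.
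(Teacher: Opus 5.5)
Your strategy matches the paper's: build bounded objects from the $\Sigma^kH_k(M)$ by finitely many extensions, then pass to arbitrary $M$ through one sequential homotopy colimit (using coproduct-closure of $\cat L$) and one sequential homotopy limit (using product-closure), relying on completeness of the standard $t$-structure. However, Steps 2 and 3 as written have the truncation towers pointing the wrong way.

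In the homological indexing you fix in Step 1, $\tau_{\ge n+1}M\to\tau_{\ge n}M$ has cofiber $\Sigma^nH_n(M)$. With that indexing, the natural maps are $M\to\tau_{\le k+1}M\to\tau_{\le k}M$ and $\tau_{\ge -k}M\to\tau_{\ge -k-1}M\to M$. So the $\tau_{\le k}M$ form an inverse system. In general there is no map $\tau_{\le k}M\to\tau_{\le k+1}M$ under $M$, since such a map would split $\Sigma^{k+1}H_{k+1}(M)$ off $\tau_{\le k+1}M$. Hence there is no sequential diagram over which to form $\operatorname{hocolim}_k\tau_{\le k}M$ in Step 2. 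Dually, the $\tau_{\ge -k}M$ form a direct system, so $\operatorname{holim}_k\tau_{\ge -k}M$ in Step 3 is not defined either. Your Steps 2--3 are correct verbatim in cohomological indexing, so the two conventions have been mixed.

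The repair is free, because $\cat L$ is closed under both products and coproducts. One option is to swap the two constructions:
\begin{itemize}
\item for bounded-below $M$, take $M\simeq\operatorname{holim}_k\tau_{\le k}M$, the fiber of a map between countable products, where your Milnor-sequence remark now applies;
\item for general $M$, take $M\simeq\operatorname{hocolim}_k\tau_{\ge -k}M$, the cofiber of a map between countable coproducts.
\end{itemize}
This is exactly the paper's proof, and coproduct-closure (the smashing hypothesis) is then used in the final step. The other option keeps your order of operations, correctly indexed. Write a general $M$ as $\operatorname{holim}_k\tau_{\le k}M$, with each $\tau_{\le k}M$ bounded above. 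Then write each bounded-above object $N$ as $\operatorname{hocolim}_j\tau_{\ge -j}N$ of bounded pieces. This mirror-image argument is equally valid.
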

\begin{proof}
    Note that $\cat L$ is closed under coproducts and products in $\Der(A)$. A sequential homotopy limit is the fibre of a map between countable products, and a sequential homotopy colimit the cofibre of a map between countable coproducts, so $\cat L$ is also closed under the formation of these. The standard $t$-structure on $\Der(A)$ is left and right complete \cite[Proposition~7.1.1.13]{HALurie}. Hence for every $M\in\Der(A)$,
    \[
        M \simeq \operatorname*{hocolim}_{a\to\infty} \tau_{\geq-a}M
    \]
        and
    \[
        \tau_{\geq-a} M \simeq \operatorname*{holim}_{b\to\infty} \tau_{\leq b} \tau_{\geq-a} M .
    \]
    Each $\tau_{\leq b}\tau_{\geq-a}M$ is bounded, hence obtained from the shifts $\Sigma^kH_k(M)$, $-a\leq k\leq b$, by finitely many extensions. By hypothesis, $\Sigma^kH_k(M)$ belongs to $\cat L$  for all $k$, therefore $\tau_{\leq b}\tau_{\geq-a}M \in \cat L$ for all $-a \leq b$, and the result follows.
\end{proof}
    \begin{Thm}\label{thm:frame-embedding}
        For every commutative ring $A$, the assignment
        \[
            \Phi_A\colon \Idem{A}\longrightarrow\Sm(\Der(A))
        \]
        is an injective frame homomorphism.
    \end{Thm}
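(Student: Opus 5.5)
The plan is to work throughout with the categories of local objects $\cat L_I = \Phi_A(I)^\perp$ instead of the kernels themselves. By \zcref{rem:strictly-localizing}, each smashing ideal $\cat S$ determines its local category $\cat S^\perp$ and is recovered as ${}^\perp(\cat S^\perp)$. Hence two smashing ideals coincide if and only if their local categories coincide, and $\cat S\subseteq\cat S'$ if and only if $\cat S'^\perp\subseteq\cat S^\perp$.

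\emph{Top, bottom, injectivity.} For the top element, $\cat L_A=\{M \mid H_*(M)=0\}=0$, so $\Phi_A(A)=\Der(A)$. For the bottom element, $\cat L_0=\Der(A)$, so $\Phi_A(0)={}^\perp\Der(A)=0$. For injectivity, suppose $\Phi_A(I)=\Phi_A(J)$. Then $\cat L_I=\cat L_J$. The module $A/I$, viewed in degree $0$, lies in $\cat L_I$, hence in $\cat L_J$. This gives $J\cdot A/I=0$, i.e.\ $J\subseteq I$, and by symmetry $I=J$.

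\emph{Joins.} The join in $\Sm(\Der(A))$ is $\Loc{\Phi_A(I_\alpha)\given\alpha}$. Its right orthogonal is $\bigcap_\alpha \Phi_A(I_\alpha)^\perp=\bigcap_\alpha\cat L_{I_\alpha}$, because orthogonality to a localizing subcategory can be tested on a generating set. Next I check that $\bigcap_\alpha \cat L_{I_\alpha}=\cat L_{\sum I_\alpha}$: a module is killed by every $I_\alpha$ if and only if it is killed by $\sum_\alpha I_\alpha$. Since both the join and $\Phi_A(\sum I_\alpha)$ are smashing with the same local category, they agree. Monotonicity of $\Phi_A$ follows from this as a special case.

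\emph{Finite meets.} This is the step I expect to be the main obstacle. By \zcref{lem:idempotent-frame} the meet of $I$ and $J$ is $IJ$, so I need $\Phi_A(IJ)=\Phi_A(I)\cap\Phi_A(J)$. The inclusion $\subseteq$ follows from monotonicity, since $IJ\subseteq I$ and $IJ\subseteq J$.

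For the reverse inclusion, let $\cat S=\Phi_A(I)\cap\Phi_A(J)$, which is smashing, and let $\cat L=\cat S^\perp$ be its local category. Since $\cat S\subseteq\Phi_A(I)$ and $\cat S\subseteq\Phi_A(J)$, we have $\cat L_I\cup\cat L_J\subseteq\cat L$. Moreover $\cat L$ is triangulated, so it is closed under extensions.

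Now take any module $N$ with $IJ\cdot N=0$ and consider the short exact sequence $0\to IN\to N\to N/IN\to 0$. Here $J\cdot IN=0$, so $IN\in\cat L_J$, and $I\cdot N/IN=0$, so $N/IN\in\cat L_I$. Hence $N\in\cat L$. Therefore any $M\in\cat L_{IJ}$ has all its homology modules $H_n(M)$ in $\cat L$, and \zcref{lem:heart-subcategory} gives $M\in\cat L$.

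Thus $\cat L_{IJ}\subseteq\cat L$. Taking left orthogonals yields $\cat S={}^\perp\cat L\subseteq{}^\perp\cat L_{IJ}=\Phi_A(IJ)$, which proves the reverse inclusion and completes the argument.
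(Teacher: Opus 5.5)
Your proposal is correct and follows essentially the same route as the paper. Joins are handled by comparing local categories, $\bigcap_\alpha \cat L_{I_\alpha}=\cat L_{\sum_\alpha I_\alpha}$; the meet uses the short exact sequence for a module killed by $IJ$ together with \zcref{lem:heart-subcategory}; and injectivity comes from $A/I\in\cat L_I$. The differences are only cosmetic: you use $IN$ where the paper uses $JN$, you obtain the inclusion $\Phi_A(IJ)\subseteq\Phi_A(I)\cap\Phi_A(J)$ from monotonicity rather than directly from orthogonals, and you check the bottom element explicitly.
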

    \begin{proof}
         We first show that $\Phi_A$ preserves arbitrary joins, i.e., that $\Phi_A(\sum_\alpha I_\alpha) = \Loc{(\Phi_A(I_{\alpha}))_{\alpha}}$. Let $J \coloneqq \sum_{\alpha} I_{\alpha}$ and set $\cat S \coloneqq \Loc{(\Phi_A(I_{\alpha}))_{\alpha}}$. The subcategory of $\cat S$-local objects is
    \[
    \begin{split}
    \cat S^\perp &=\bigcap_\alpha\Phi_A(I_\alpha)^\perp\\
    &= \bigcap_\alpha  \Set{M \in \Der(A) \given I_\alpha\cdot H_n(M)=0 \text{ for every }n\in\bbZ}\\
    &= \Set{M\in\Der(A) \given I_\alpha\cdot H_n(M)=0 \text{ for every }\alpha\text{ and every }n\in\bbZ}\\
    &= \Set{M\in \Der(A) \given  J\cdot H_n(M)=0\text{ for every }n\in\bbZ}\\
    &= \Phi_A(J)^\perp,
    \end{split}
    \]
    so that, by \zcref{rem:strictly-localizing}, we have
    \[
        \cat S={}^{\perp}(\cat S^\perp)
        ={}^{\perp}(\Phi_A(J)^\perp)
        =\Phi_A(J).
    \]
    We now show that $\Phi_A$ preserves finite meets, i.e., that $\Phi_A(I \meet J) = \Phi_A(I \cdot J) = \Phi_A(I) \cap \Phi_A(J)$. By definition, $\Phi_A(I \cdot J) = {}^{\perp}\cat L_{I \cdot J}$. It is clear that $\cat L_I,\cat L_J \subseteq \cat L_{I \cdot J}$, and so $\cat L_I \cup \cat L_J \subseteq \cat L_{I \cdot J}$. Taking orthogonals, we deduce that
    \begin{equation}\label{eq:inclusion-1}
    \Phi_A(I \cdot J) = {}^{\perp} \cat L_{I \cdot J} \subseteq {}^{\perp}(\cat L_I \cup \cat L_J) = {}^{\perp}\cat L_I \cap {}^{\perp}\cat L_J = \Phi_A(I) \cap \Phi_A(J).
    \end{equation}
    Conversely, suppose that $N \in \Mod(A)$ is a \emph{discrete} $A$-module with $IJ \cdot N = 0$, and consider the short exact sequence 
    \[
    0 \to JN \to N \to N/JN \to 0.
    \]
    This satisfies $I(JN) = 0$ and $J(N/JN) = 0$, so that $JN \in \cat L_I$ and $N/JN \in \cat L_J$.  Let $\cat P \coloneqq (\Phi_A(I) \cap \Phi_A(J))^{\perp}$, and observe that $\Phi_A(I),\Phi_A(J) \supseteq \Phi_A(I) \cap \Phi_A(J)$ implies $\cat L_I,\cat L_J \subseteq \cat P$, so that the above short exact sequence shows that $N \in \cat P$. 

    Now consider $M \in \cat L_{IJ}$. Recall that $\Phi_A(I) \cap \Phi_A(J) = \Phi_A(I) \meet \Phi_A(J)$ in $\Sm(\Der(A))$, and hence remains smashing, and so the right orthogonal $\cat P$ is closed under products and coproducts. Therefore, the previous paragraph and \zcref{lem:heart-subcategory} show that $M \in \cat P$, i.e., 
    \[
    \cat L_{IJ} \subseteq (\Phi_A(I) \cap \Phi_A(J))^{\perp}.
    \]
    Taking left orthogonals and using \zcref{rem:strictly-localizing} we deduce that 
  \begin{equation}\label{eq:inclusion-2}
    \Phi_A(I \cdot J) \supseteq \Phi_A(I) \cap \Phi_A(J).
    \end{equation}
    Together, \zcref{eq:inclusion-1,eq:inclusion-2} imply that 
    \[
        \Phi_A(I \cdot J) = \Phi_A(I) \cap \Phi_A(J),
    \]
    as required. Finally, the top element is preserved because $\cat L_A = 0$, and so $\Phi_A(A) = \Der(A)$. 

    We have now shown that $\Phi_A$ is a frame morphism. To see that it is an embedding, suppose that $\Phi_A(I) = \Phi_A(J)$, or equivalently, $\cat L_I = \cat L_J$. Since $A/I \in \cat L_I$, it follows that $A/I \in \cat L_J$, and so $J \cdot (A/I) = 0$. Therefore, $J \subseteq \operatorname{Ann}_A(A/I)=I$. By symmetry $I \subseteq J$, so $I = J$. 
    \end{proof}
    \begin{Rem}
    The same argument extends to a connective $\mathbb E_\infty$-ring spectrum $R$ and gives a frame embedding
    \[
        \Phi_R \colon \Idem{\pi_0(R)} \longrightarrow \Sm(\Mod(R)),
    \]
    sending $I$ to the kernel of the smashing localization whose local objects are those $R$-modules $M$ satisfying $I \cdot \pi_n(M) = 0$ for every $n \in \bbZ$.
\end{Rem}
    \begin{Rem}
        Hebestreit--Scholze \cite{hebestreit2024note} give an explicit description of the smashing localization associated to an idempotent ideal $I \subseteq A$. More specifically, they construct a connective derived $A$-algebra $A/I^\infty$ such that restriction of scalars identifies $\Mod(A/I^\infty)$ with $\cat L_I^\infty$. Thus the associated localization endofunctor is given by $M \longmapsto (A/I^\infty) \otimes_A^{\mathbb L} M$. When $I$ is flat as an $A$-module, $A/I^\infty$ agrees with the ordinary quotient $A/I$ \cite[Remark 4(4)]{hebestreit2024note}. This recovers the classical flat idempotent ideal setting of almost ring theory developed by Gabber--Ramero \cite[Section 2.1.1 and Remark 2.1.4(i)]{GabberRamero2003Almost}. Their framework more generally allows $I \otimes_A I$ to be flat, without requiring $I$ itself to be flat. Our proof of \zcref{thm:frame-embedding} does not use this more detailed description.

        We note that the construction of $A/I^\infty$ in \cite{hebestreit2024note} as the totalization of the Amitsur complex of $A \to A/I$ makes sense for arbitrary ideals $I$, although the resulting map need not be a smashing localization. For example, when $A = \bbZ$ and $I = (p)$ for a prime $p$, it gives $\bbZ/(p)^\infty \simeq \bbZ_p$. Indeed, in this case the construction identifies with $\operatorname{holim}_n \bbZ/p^n$.
    \end{Rem}
    \begin{Rem}
        The embedding $\Phi_A$ can also be deduced from recent work of Liang \cite{Liang2026Generalized}. Indeed, \cite[Theorem 7.8]{Liang2026Generalized} gives an equivalence of frames
        \[
        \Idem{A} \simeq \operatorname{cIdem}(\Der_{\infty}(A)_{\ge 0}),
        \]
        where, for a presentably symmetric monoidal $\infty$-category $\cat C$, $\operatorname{cIdem}(\cat C)$ denotes the frame of coidempotent objects in $\cat C$. Moreover, \cite[Proposition 3.9]{Liang2026Generalized} shows that there is a frame embedding
        \[
        \operatorname{cIdem}(\Der_{\infty}(A)_{\ge 0}) \hookrightarrow \operatorname{cIdem}(\Der_{\infty}(A)).
        \]
    By stability, the latter frame is isomorphic to $\Sm(\Der_{\infty}(A))$; see \cite{BalmerFavi2011Generalized} or \cite[Section 2]{Aoki2025Sheaves}. Liang's proof uses the full strength of \cite{hebestreit2024note}. 
    \end{Rem}
    \subsection{Examples of spatial frames of idempotent ideals}
        Combining \zcref{thm:frame-embedding,lem:subframe-of-spatial-frame}, we see that if the frame $\Idem{A}$ is not spatial, then neither is $\Sm(\Der(A))$. On the other hand, in many cases, the frame $\Idem{A}$ is spatial, as we now discuss. For this, we recall that the poset of radical ideals in $A$ forms a spatial frame $\RadId(A)$ isomorphic to $\mathcal{O}(\Spec(A))$, see \cite[Section 1.3]{KockPitsch2017Hochster} for example. Here the join of a collection of ideals $\{ I_{\alpha} \}_{\alpha}$ is given by $\sqrt{\sum_{\alpha}I_{\alpha}}$ while the meet of $I$ and $J$ is $\sqrt{IJ}$. 
    \begin{Lem}
        The assignment $I \mapsto \sqrt{I}$ defines a morphism $\rho \colon \Idem{A} \to \RadId(A)$ of frames. It is injective if and only if
        \begin{equation}\label{eq:spatial-condition}
            \sqrt{I} = \sqrt{J} \implies I = J
        \end{equation}
        for all $I,J \in \Idem{A}$, in which case $\Idem{A}$ is spatial.
    \end{Lem}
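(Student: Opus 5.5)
We verify three things in turn: that $\rho$ is a frame morphism, that injectivity of $\rho$ is equivalent to \zcref{eq:spatial-condition}, and that injectivity forces spatiality.

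\emph{Frame morphism.} Monotonicity of $I\mapsto\sqrt{I}$ is clear.

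For the top element, $\sqrt{A}=A$.

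For binary meets, we need $\sqrt{IJ}=\sqrt{\sqrt{I}\sqrt{J}}$. The inclusion $\sqrt{IJ}\subseteq\sqrt{\sqrt I\sqrt J}$ holds because $IJ\subseteq\sqrt I\sqrt J$. For the reverse inclusion, let $x=\sum a_ib_i$ with $a_i^{n}\in I$ and $b_i^{n}\in J$. Every product $a_ib_i$ then lies in $\sqrt{IJ}$, since $(a_ib_i)^n=a_i^nb_i^n\in IJ$. Radical ideals are closed under sums, so $\sqrt I\sqrt J\subseteq\sqrt{IJ}$, and taking radicals gives the claim. Equivalently, use $\sqrt{IJ}=\sqrt I\cap\sqrt J$ together with the fact that the meet in $\RadId(A)$ is intersection.

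For arbitrary joins, \zcref{lem:idempotent-frame} gives joins in $\Idem{A}$ as $\sum_\alpha I_\alpha$. So we need $\sqrt{\sum_\alpha I_\alpha}=\sqrt{\sum_\alpha\sqrt{I_\alpha}}$. This follows from $\sum I_\alpha\subseteq\sum\sqrt{I_\alpha}\subseteq\sqrt{\sum I_\alpha}$, where the second inclusion uses that each $\sqrt{I_\alpha}\subseteq\sqrt{\sum I_\alpha}$ and that radical ideals are closed under sums. The empty join is the zero ideal, which maps to the nilradical $\sqrt{0}$, the bottom element of $\RadId(A)$.

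\emph{Injectivity.} Injectivity of $\rho$ is literally the statement \zcref{eq:spatial-condition}, so nothing needs to be proved here.

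\emph{Spatiality.} By \cite[Section 1.3]{KockPitsch2017Hochster}, $\RadId(A)\cong\mathcal O(\Spec A)$, and this frame is spatial. If $\rho$ is injective, \zcref{lem:subframe-of-spatial-frame} then shows that $\Idem{A}$ is spatial.

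The only step needing any care is the meet computation, specifically that $\sqrt I\sqrt J\subseteq\sqrt{IJ}$. It is handled by the elementwise argument above. Idempotence of $I$ and $J$ plays no role in any of these steps; it is needed only so that the meet in $\Idem{A}$ is the product $IJ$, as established in \zcref{lem:idempotent-frame}.
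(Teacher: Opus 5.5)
Your proof is correct and follows essentially the same route as the paper: joins via the sandwich $\sum_\alpha I_\alpha\subseteq\sum_\alpha\sqrt{I_\alpha}\subseteq\sqrt{\sum_\alpha I_\alpha}$, binary meets via $\sqrt{IJ}=\sqrt I\cap\sqrt J$, the top element via $\sqrt A=A$, and spatiality from the subframe lemma applied to the spatial frame $\RadId(A)$. Your elementwise check that $\sqrt I\sqrt J\subseteq\sqrt{IJ}$ just spells out the identity the paper uses directly.
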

    \begin{proof}
        For any family $\{I_\alpha\}_\alpha$ of idempotent ideals, we have
        \[
            \sum_\alpha I_\alpha\subseteq\sum_\alpha\sqrt{I_\alpha}\subseteq\sqrt{\sum_\alpha I_\alpha}.
        \]
        Taking radicals gives $\sqrt{\sum_\alpha I_\alpha}=\sqrt{\sum_\alpha\sqrt{I_\alpha}}$, which says precisely that $\rho$ preserves arbitrary joins. It preserves binary meets because $\sqrt{IJ}=\sqrt I\cap\sqrt J$, and it preserves the top element because $\sqrt A=A$. Thus $\rho$ is a morphism of frames. Finally, \zcref{eq:spatial-condition} is precisely the assertion that $\rho$ is injective, in which case \zcref{lem:subframe-of-spatial-frame} implies that $\Idem{A}$ is spatial, since $\RadId(A)$ is spatial.
    \end{proof}
    \begin{Prop}\label{prop:E(A)-spatial}
        Suppose that $A$ is one of the following:
        \begin{enumerate}
            \item a noetherian ring;
            \item a valuation domain; 
            \item an absolutely flat ring. 
        \end{enumerate}
        Then $\Idem{A}$ is a spatial frame. 
    \end{Prop}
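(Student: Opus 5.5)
The plan is to verify condition \zcref{eq:spatial-condition} in each of the three cases; the preceding lemma then shows that $\rho$ is injective and hence that $\Idem{A}$ is spatial. In each case I will show directly that an idempotent ideal $I$ is determined by its radical. Often $I$ will turn out to be radical itself, or to equal a canonical ideal built from $\sqrt I$.

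\emph{Noetherian case.} If $A$ is noetherian, $I$ is finitely generated, and $I^2 = I$ lets us apply Nakayama's lemma in its determinant-trick form. This gives an element $e \in I$ with $(1-e)I = 0$. Then $e - e^2 = (1-e)e = 0$, so $e$ is idempotent and $I = eA$. An ideal generated by an idempotent is radical: if $x^n \in eA$, then $(1-e)x^n = 0$, so $((1-e)x)^n = 0$ because $1-e$ is idempotent. In a general ring this only makes $(1-e)x$ nilpotent, not zero, so I will instead argue as follows. If $I = eA$ and $J = fA$ have the same radical, then $e \in \sqrt{fA}$, so $(1-f)e$ is nilpotent. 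But $(1-f)e$ is itself idempotent, hence zero, giving $e = fe \in fA$. By symmetry $eA = fA$, which proves \zcref{eq:spatial-condition}.

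\emph{Valuation domain case.} The ideals of a valuation domain are totally ordered. Let $I \neq 0$ be idempotent. I will show that $I$ is prime. Suppose $ab \in I$ with $a, b \notin I$. Then $I \subsetneq aA$ and $I \subsetneq bA$. For $x \in I$ we have $x/a \in A$, and $x/a \in I$ should hold because $I = I^2$: write $x = \sum y_i z_i$ with $y_i, z_i \in I \subseteq aA$, which gives $x \in aI$. It follows that $I = aI = bI$, hence $I = abI \subseteq (ab)I$. Since $ab \in I$, we also get $I \subseteq abA \subseteq I^2$, but that does not yet give a contradiction. I expect this step to be the main obstacle. What I would try first is the standard fact that in a valuation domain every idempotent ideal is prime (Fuchs--Salce), proved using $I = aI$ together with the total ordering: $ab \in I = abI$ forces $1 \in I$ after cancellation in the domain. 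Granting primality, $I = \sqrt I$, so \zcref{eq:spatial-condition} holds trivially.

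\emph{Absolutely flat case.} Every ideal of a von Neumann regular ring is idempotent and radical: $x = x^2 y$ gives both $x \in (x)^2$ and reducedness. Therefore $\Idem{A} = \RadId(A)$, and the injectivity of $\rho$ is immediate.
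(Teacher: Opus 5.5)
Your proposal is correct and follows the paper's proof: in each case you verify that $\sqrt{I}=\sqrt{J}$ forces $I=J$ for idempotent $I,J$, via an idempotent generator in the noetherian case, primality of proper idempotent ideals in the valuation case, and $\Idem{A}=\RadId(A)$ in the absolutely flat case, and you simply reprove the facts the paper cites from the Stacks project and from Gilmer (in the noetherian case, the paper's observation $e=e^n\in fA$ would spare you the nilpotent-idempotent detour). The valuation-domain step you flag as an obstacle is already closed by your own cancellation idea, and more directly than via $I=abI$: from $I=I^2\subseteq aI$ and $ab\in I$ you get $ab=ay$ with $y\in I$, hence $b=y\in I$ after cancelling $a\neq 0$; this shows that every idempotent ideal is prime or equal to $A$, hence radical.
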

    \begin{proof}
        Assume first that $A$ is noetherian, so that every ideal $I$ is finitely generated. If $I$ is an idempotent ideal, then by Nakayama's lemma, there exists an idempotent $e \in A$ such that $I = (e)$ \cite[\href{https://stacks.math.columbia.edu/tag/00EH}{Tag 00EH}]{stacks-project}. So suppose that $\sqrt{(e)} = \sqrt{(f)}$. Then $e^n \in (f)$ for some $n \ge 1$, but $e$ is idempotent, so $e^n = e$, and therefore $e \in (f)$, and $(e) \subseteq (f)$. By symmetry, $(f) \subseteq (e)$, and hence $(e) = (f)$. Therefore \zcref{eq:spatial-condition} is satisfied for noetherian rings. 
    
        Now suppose that $A$ is a valuation domain. In this case, proper idempotent ideals are automatically prime \cite[Theorem 17.1(3)]{Gilmer1992Multiplicative}, and hence radical. The unit ideal $A$ is radical as well, so $\rho$ is injective.
            
        Finally, if $A$ is absolutely flat, then every ideal is idempotent and radical. Thus $\Idem{A}=\RadId(A)$, so $\rho$ is an isomorphism.
    \end{proof}
    \begin{Rem}
        For a noetherian ring, the frame $\Sm(\Der(A))$ identifies with $\mathcal{O}(\Spec(A)^{\vee})$ by Neeman's classification of smashing subcategories in $\Der(A)$ \cite[Section 3]{Neeman1992Chromatic}. By \zcref{lem:subframe-of-spatial-frame} this gives a roundabout proof that $\Idem{A}$ is spatial! In the case of a finite-dimensional valuation domain, the frame $\Sm(\Der(A))$ is studied in \cite{BalchinTecklenburg2025Classifying}. 
    \end{Rem}
    \begin{Rem}
      Condition \zcref{eq:spatial-condition} is sufficient, but not necessary, for $\Idem{A}$ to be spatial. Let $k$ be a field, let $B=k[t,t^{1/2},t^{1/4},\ldots]$, and let $\mathfrak n=(t^{1/2^n}\mid n\ge 0)$. Then $V=B_{\mathfrak n}$ is a valuation domain with value group $\mathbb Z[1/2]$ and maximal ideal $\mathfrak m=(t^q\mid q\in\mathbb Z[1/2]_{>0})$. This is Keller's valuation domain, with $\ell = 2$, from his counterexample to the generalized smashing conjecture \cite[Section 2]{Keller1994Remark}; see also \cite[Example 5.24]{BazzonivStovivcek2017Smashing}. Notice that $\mathfrak m^2=\mathfrak m$, since $t^q=t^{q/2}t^{q/2}$ for every $q>0$.
    
        Set $A=V/(t)$ and $\overline{\mathfrak m}=\mathfrak m/(t)$. Equivalently, $A\cong k[t^{1/2^\infty}]/(t)$. Since $A$ is a quotient of a valuation domain, its ideals are linearly ordered by inclusion, and hence $\Idem{A}$ is a complete chain. We note that every complete chain is spatial; for each $b<1$, the map
        \[
            p_b(x)=
            \begin{cases}
                0 & \text{if }x\le b,\\
                1 & \text{otherwise}
            \end{cases}
        \]
        is a point. If $a\nleq b$, then $p_b(a)=1$ and $p_b(b)=0$, so $\Idem{A}$ has enough points.\footnote{One can show that $\Idem{A}=\{0<\overline{\mathfrak m}<A\}$. Since we do not need this stronger statement, we do not include the proof.}
    
         However, $\overline{\mathfrak m}$ is a nonzero idempotent ideal of $A$. It is nonzero because $t^{1/2}\notin(t)$, and it is idempotent because $\mathfrak m^2=\mathfrak m$. Moreover, $\overline{\mathfrak m}$ is generated by the elements $t^q$ with $q>0$, each of which is nilpotent in $A$, since $(t^q)^N\in(t)$ whenever $Nq\ge 1$. As the nilpotent elements of a commutative ring form an ideal, $\overline{\mathfrak m}$ is a nil ideal. Since $A$ is local with maximal ideal $\overline{\mathfrak m}$, it follows that
        \[
            \sqrt{0}=\overline{\mathfrak m}=\sqrt{\overline{\mathfrak m}}.
        \]
        Thus the radicalization map $\rho \colon \Idem{A} \to \RadId(A)$ is not injective, even though $\Idem{A}$ is spatial.
    \end{Rem}
    \section{Preliminaries on Boolean algebras}\label{sec:boolean}
    
    \subsection{Boolean algebras and Stone duality}
    \begin{Def}
        A \emph{Boolean algebra} $\ba{B}$ is a bounded distributive lattice in which every element has a complement, i.e., for all $b \in \ba{B}$ there exists an element $\neg b$ with $b \meet \neg b = 0$ and $b \join \neg b = 1$. 
    \end{Def}
    
    \begin{Exa}\label{exa:powerBA}
        If $X$ is a set, then the power set $\ba{P}(X)$ on $X$ naturally carries the structure of a Boolean algebra with joins given by unions and meets given by intersections of subsets. Any finite Boolean algebra is of the form $\ba{P}(X)$ for some finite set $X$.
    \end{Exa}
    
    \begin{Rem}
        Boolean algebras form a category $\BAlg$, where a morphism of Boolean algebras is a morphism of distributive lattices that preserves the complementation.  The category is bicomplete, i.e., admits all (set-indexed) limits and colimits. Of particular interest to us is the coproduct $\oplus$ of two Boolean algebras, which in the literature is also referred to as the \emph{free product}. Under Stone duality (see \zcref{thm:stone-duality}), it corresponds to the Cartesian product: Given $\ba{B}_1$ and $\ba{B}_2$, we have
            \[
                \ba{B}_1 \oplus \ba{B}_2 \cong \Clop(\Spec(\ba{B}_1) \times \Spec(\ba{B}_2)).
            \]
    \end{Rem}
    \begin{Rem}\label{rem:booleanrings}
        Boolean algebras may equivalently be described in terms of \emph{Boolean rings}, i.e., commutative rings $B$ in which $x^2=x$ for every $x \in 
        B$. Write $\BRings$ for the full subcategory of the category of commutative rings on the Boolean rings. Then there is an isomorphism of categories
            \[
                \BAlg \xlongrightarrow{\sim} \BRings
            \]
        induced by sending a Boolean algebra $\ba{B}$ to the Boolean ring $B$ with the same underlying set and operations $a \cdot b = a \meet b$ and $a + b = (a \meet \neg b) \join (\neg a \meet b)$ for $a,b \in B$. Note that Boolean rings are automatically $\bbF_2$-algebras and that the coproduct of Boolean algebras corresponds to the tensor product (over $\bbF_2)$ of the corresponding Boolean rings. 
    \end{Rem}
    \begin{Exa}
        Let $X$ be a non-empty topological space, and let $\Clop(X)$ denote the set of clopen sets in $X$. Then $\Clop(X)$ is a Boolean algebra; the join and meet are given by union and intersection respectively, and $\neg U \coloneqq X \setminus U$. 
    \end{Exa}
    In fact, every Boolean algebra is isomorphic to one arising in this way. We recall that a Stone space is a compact Hausdorff totally disconnected space.
    \begin{Thm}[Stone duality]\label{thm:stone-duality}
        Every Boolean algebra $\ba{B}$ is isomorphic to $\Clop(X)$ for some Stone space $X$. More precisely, there is an equivalence of categories
        \[
            \BAlg^{\mathrm{op}} \simeq \mathbf{Stone}.
        \]
    \end{Thm}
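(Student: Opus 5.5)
We prove Stone duality by constructing the spectrum of a Boolean algebra and checking that clopens recover the algebra. Given $\ba{B}$, we let $\Spec(\ba{B})$ be the set of ultrafilters on $\ba{B}$. Equivalently, via \zcref{rem:booleanrings}, these are the prime ideals of the associated Boolean ring $B$, which coincide with its maximal ideals and with the ring maps $B \to \bbF_2$. We topologize $\Spec(\ba{B})$ by the basic sets $U_b = \{x \mid b \in x\}$ for $b \in \ba{B}$. Since $U_{b \meet c} = U_b \cap U_c$ and $U_{\neg b}$ is the complement of $U_b$, each $U_b$ is clopen and these sets form a basis.

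\textbf{The space is Stone.} For Hausdorffness and total disconnectedness: two distinct ultrafilters $x \neq y$ are separated by some $b \in x \setminus y$. Then $\neg b \in y$, so $U_b$ and $U_{\neg b}$ give a partition into clopens separating the two points. Compactness is the main step. Take a cover by basic opens $U_{b_i}$ and suppose no finite subcover exists. Then every finite meet of the elements $\neg b_i$ is nonzero, so the $\neg b_i$ generate a proper filter. By Zorn's lemma (equivalently, the existence of maximal ideals containing a proper ideal of $B$) this filter extends to an ultrafilter. That ultrafilter lies in none of the $U_{b_i}$, a contradiction.

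\textbf{Clopens recover the algebra.} The map $b \mapsto U_b$ is a Boolean homomorphism $\ba{B} \to \Clop(\Spec \ba{B})$. It is injective: if $b \neq c$, then some element $d$ of the form $b \meet \neg c$ or $c \meet \neg b$ is nonzero, and extending the principal filter generated by $d$ to an ultrafilter yields a point lying in $U_b \triangle U_c$. It is surjective: a clopen set is a union of basic opens, and being closed it is compact, so it is a finite union $U_{b_1} \cup \dots \cup U_{b_n} = U_{b_1 \join \dots \join b_n}$.

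\textbf{The functors form an equivalence.} On morphisms, a homomorphism $f\colon \ba{B} \to \ba{B}'$ induces the continuous map $x \mapsto f^{-1}(x)$, and a continuous map induces the inverse image on clopens. This gives two contravariant functors, and the isomorphism $\ba{B} \cong \Clop(\Spec \ba{B})$ above is natural. For a Stone space $X$, the evaluation map $X \to \Spec(\Clop X)$, sending $p$ to the ultrafilter of clopens containing $p$, is a homeomorphism:
\begin{itemize}
  \item it is injective because clopens separate points in a Stone space;
  \item it is surjective because, by compactness, any ultrafilter of clopens has nonempty intersection, and that intersection is a single point;
  \item it is a homeomorphism because it is a continuous bijection from a compact space to a Hausdorff space.
\end{itemize}
The main obstacle is entirely the use of the ultrafilter lemma in the compactness and separation steps. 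Everything else is routine bookkeeping, and I would cite a standard reference such as Johnstone's \emph{Stone Spaces} for the details.
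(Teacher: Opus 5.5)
Your argument is correct and is the standard ultrafilter proof. The paper gives no proof of its own: it quotes Stone duality as a classical result, and the remark that follows records exactly the model you use, namely $\Spec(\ba{B})$ as the set of ultrafilters with basic opens $\{\mathcal{F} \mid b \in \mathcal{F}\}$.

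The one step you pass over too quickly is ``clopens separate points in a Stone space.'' With the paper's definition (compact, Hausdorff and totally disconnected), this is not immediate. It needs the lemma that in a compact Hausdorff space the quasi-component of a point, meaning the intersection of all clopens containing it, equals its connected component. That lemma is precisely what makes $X \to \Spec(\Clop X)$ injective, and your ``single point'' claim in the surjectivity step is the same fact. So the ultrafilter lemma is not quite the only non-routine ingredient.
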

    \begin{Rem}
        Given a Boolean algebra $\ba{B}$, the associated Stone space is $X_{\ba{B}} \coloneqq \Spec(\ba{B})$. Concretely, the set underlying $\Spec(\ba{B})$ is given by the set $\Ult(\ba{B})$  of ultrafilters of $\ba{B}$. This set comes equipped with the Stone topology, with a basis of opens formed by the sets $\{\cat F \in \Ult(\ba{B}) \mid b \in \cat F\}$ where $b$ runs through the elements of $\ba{B}$.
    \end{Rem}
    \begin{Exa}\label{exa:stonecech}
        Continuing \zcref{exa:powerBA}, let $X$ be a set and write $\ba{P}(X)$ for the Boolean algebra of subsets of $X$. The Stone representative of $\ba{P}(X)$ can then be identified as
            \[
                \Spec(\ba{P}(X)) \cong \beta X
            \]
        where $\beta X$ is the Stone--\v{C}ech compactification of $X$ viewed as a discrete space.
    \end{Exa}
    \begin{Exa}
        Given a set $S$ there exists a \emph{free Boolean algebra} $\ba{F}(S)$, for example by observing that the forgetful functor from Boolean algebras to sets admits a left adjoint. Under Stone duality, the corresponding space is $2^S$, and $\ba{F}(S) \cong \Clop(2^S)$. For $n \geq 0$ a natural number, the free Boolean algebra $\ba{F}_{[n]}$ on $n$ generators identifies with $\ba{P}\ba{P}([n])$, where $[n]\coloneqq \{0,1,\ldots,n-1\}$ denotes an $n$ element set. In particular, $\ba{F}_{[n]}$ has $2^{2^n}$ elements.
    \end{Exa}
    \begin{Def}
        Let $\ba{F}_\omega \coloneqq \ba{F}(\mathbb{N})$ denote the free Boolean algebra on countably many generators.
    \end{Def}
    
    \begin{Rem}\label{rem:coproductFw}
        We have $\ba{F}_{\omega} = \bigcup_{n\geq 0} \ba{F}_{[n]} \cong \colim_{n \geq 0}\ba{P}\ba{P}([n])$. Here, the transition map $\ba{P}\ba{P}([n]) \to \ba{P}\ba{P}([n+1])$ is induced by applying the (contravariant) power set functor twice to the inclusion $[n]=\{0,1,\ldots,n-1\} \to \{0,1,\ldots,n\} =  [n+1]$. Under Stone duality, this presentation of $\ba{F}_{\omega}$ corresponds to the usual construction of Cantor space $2^{\bbN} \cong \lim_n 2^{[n]}$. Moreover, since the free functor $\ba{F}$ is a left adjoint, any bijection $\mathbb{N} \amalg \mathbb{N} \cong \mathbb{N}$ induces an isomorphism $\ba{F}_{\omega} \oplus \ba{F}_{\omega} \xrightarrow{\sim} \ba{F}_{\omega}$ of Boolean algebras. 
    \end{Rem}
    
    We finish with one final definition. 
    
    \begin{Def}\label{def:principal-ideal}
        Let $\ba{B}$ be a Boolean algebra. For any nonzero $b \in \ba{B}$, set
        \[
            \ba{B}_b \coloneqq \Set{x \in \ba{B}\given x \leq b},
        \]
        called the \emph{principal ideal} of $\ba{B}$ generated by $b$. Then $\ba{B}_b$ is a Boolean algebra whose meet and join are inherited from $\ba{B}$, whose bottom element is $0$, whose top element is $b$, and whose complement operation is given by $\neg_b x \coloneqq b \meet \neg x$.
    \end{Def}
    
    \subsection{Atomless Boolean algebras}
    
    \begin{Hyp}\label{hyp:nontrivial}
        For the remainder of this paper, we always assume that $0 \ne 1$ in a Boolean algebra, i.e., we exclude the trivial Boolean algebra.
    \end{Hyp}
    
    \begin{Def}\label{def:atoms}
        An \emph{atom} in a Boolean algebra $\ba{B}$ is a minimal nonzero element $a \in \ba{B}$. We write $\At(\ba{B})$ for the set of atoms in $\ba{B}$. A Boolean algebra is called \emph{atomless} if $\At(\ba{B}) = \emptyset$.
    \end{Def}
    
    \begin{Exa}\label{exa:atomsFn} 
        For any set $X$, the atoms of the power set Boolean algebra $\ba{P}(X)$ are precisely given by the singletons, i.e., $\At(\ba{P}(X)) = X$. In particular, the atoms of the free Boolean algebra $\ba{F}_{[n]}$ on $n$ generators are given by $\At(\ba{F}_{[n]}) \cong \At(\ba{P}\ba{P}([n])) \cong \ba{P}([n])$. Explicitly, if we write $g_0,\ldots,g_{n-1}$ for the generators of $\ba{F}_{[n]}$, then the atoms are $\bigwedge_{i=0}^{n-1} g_i^{\epsilon_i}$ running through strings  $(\epsilon_i) \in \{1,-1\}^{[n]}$. Here, we write $x^{-1} \coloneqq \neg x$ for any $x \in \ba{F}_{[n]}$.  
    \end{Exa}
    \begin{Rem}\label{rem:isolated-points}
        A Boolean algebra $\ba{B}$ is atomless if and only if the associated Stone space $X_{\ba{B}}$ is \emph{perfect}, i.e., it does not contain any isolated points (i.e., singleton opens). 
    \end{Rem}    
    \begin{Rem}\label{rem:atomlessinfinity}
        Any atomless Boolean algebra must be infinite. Moreover, the coproduct of two nontrivial Boolean algebras is atomless if and only if at least one of them is atomless. Indeed, under Stone duality, coproducts of Boolean algebras correspond to products of Stone spaces, and a product of two nonempty spaces has an isolated point if and only if both factors have an isolated point.
    \end{Rem}   
    \begin{Lem}\label{lem:atomlessprincipalideal}
        Let $\ba{B}$ be an atomless Boolean algebra. For any nonzero $b \in \ba{B}$, the principal ideal $\ba{B}_b = \{x \in \ba{B}\mid x \leq b\}$ viewed as a Boolean algebra is also atomless. 
    \end{Lem}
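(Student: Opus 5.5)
The plan is to argue directly from the definitions, in two steps. First, $\ba{B}_b$ is a nontrivial Boolean algebra: its bottom is $0$ and its top is $b$, and these differ because $b \neq 0$. This matters because of \zcref{hyp:nontrivial}. Second, and this is the main step, I would show that the atoms of $\ba{B}_b$ are exactly the atoms of $\ba{B}$ lying below $b$. Since $\ba{B}$ has no atoms, $\ba{B}_b$ then has none either.

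For the second step, the order on $\ba{B}_b$ is the restriction of the order on $\ba{B}$, and the two algebras share the bottom element $0$. Suppose $a$ were an atom of $\ba{B}_b$, so $a \neq 0$, $a \leq b$, and $a$ is minimal among nonzero elements of $\ba{B}_b$. Because $\ba{B}$ is atomless, $a$ is not an atom of $\ba{B}$. Hence there is some $x \in \ba{B}$ with $0 < x < a$. Then $x \leq a \leq b$, so $x \in \ba{B}_b$, and $x$ is a nonzero element of $\ba{B}_b$ strictly below $a$. This contradicts the minimality of $a$ in $\ba{B}_b$, so $\At(\ba{B}_b) = \emptyset$.

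There is no real obstacle here. The only point needing care is that the down-set $\ba{B}_b$ contains every element of $\ba{B}$ below $a$, so minimality in $\ba{B}_b$ and minimality in $\ba{B}$ coincide for elements below $b$. The Boolean algebra structure on $\ba{B}_b$, including the relative complement $\neg_b$, plays no role, since being an atom depends only on the order and the bottom element.

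As a sanity check, the Stone dual argument gives the same result. Here $\ba{B}_b \cong \Clop(U_b)$, where $U_b$ is the clopen subset of $X_{\ba{B}}$ corresponding to $b$. A point that is isolated in the open subspace $U_b$ would also be isolated in $X_{\ba{B}}$, so $U_b$ is perfect, and \zcref{rem:isolated-points} then shows $\ba{B}_b$ is atomless.
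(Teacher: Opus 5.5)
Your proof is correct and is essentially the paper's argument: given a nonzero element $a$ of $\ba{B}_b$, atomlessness of $\ba{B}$ gives some $0<x<a$, and $x \le a \le b$ puts $x$ in $\ba{B}_b$, so $a$ is not an atom there. Your extra remarks, that $\ba{B}_b$ is nontrivial because $b\neq 0$ and the Stone-dual check via the clopen $U_b$, are also correct; the paper leaves them implicit.
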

    \begin{proof}
        Because $\ba{B}$ is atomless, for any nonzero $x \in \ba{B}_{b}$, there exists some $0 < y < x$ in $\ba{B}$. By definition $y \in \ba{B}_{b}$, so $x$ is not an atom in $\ba{B}_b$ either. 
    \end{proof}
    
    \begin{Def}
        Two subalgebras $\ba{A}_1,\ba{A}_2$ of a Boolean algebra $\ba{B}$ are said to be \emph{independent} if $a_1 \meet a_2 >0$ for all nonzero $a_1 \in \ba{A}_1$ and $a_2 \in \ba{A}_2$. Equivalently, by \cite[Corollary 44.1]{GivantHalmos2009Introduction}, the canonical homomorphism
        \[
            \ba{A}_1\oplus\ba{A}_2  \longrightarrow \langle\ba{A}_1,\ba{A}_2\rangle_{\ba{B}}
        \]
        is an isomorphism, where the target denotes the smallest Boolean subalgebra of $\ba{B}$ containing $\ba{A}_1$ and $\ba{A}_2$.
    \end{Def}
    
    \begin{Thm}\label{thm:omegafree}
        The free Boolean algebra $\ba{F}_{\omega}$ on a countably infinite set of generators satisfies the following properties:
        \begin{enumerate}
            \item $\ba{F}_{\omega}$ is atomless;
            \item any countable atomless Boolean algebra is isomorphic to $\ba{F}_{\omega}$. 
        \end{enumerate}
    \end{Thm}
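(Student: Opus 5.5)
For (1), I will use the identification $\ba{F}_{\omega} = \bigcup_{n} \ba{F}_{[n]}$ from \zcref{rem:coproductFw}. A nonzero $x \in \ba{F}_\omega$ lies in some $\ba{F}_{[n]}$. There it is a nonempty join of atoms $\bigwedge_{i<n} g_i^{\epsilon_i}$ as in \zcref{exa:atomsFn}. Pick one such atom $a \le x$. Since the generator $g_n$ is independent of $g_0,\dots,g_{n-1}$, both $a \meet g_n$ and $a \meet \neg g_n$ are nonzero: they are distinct atoms of $\ba{F}_{[n+1]}$. Hence $0 < a \meet g_n < a \le x$, so $x$ is not an atom.

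Dually, $X_{\ba{F}_\omega} \cong 2^{\bbN}$, and Cantor space has no isolated points. By \zcref{rem:isolated-points} this gives a second proof.

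For (2), I will run a back-and-forth argument. It is easiest to phrase it as a forth-only construction, since $\ba{F}_\omega$ has a canonical filtration by finite subalgebras. Let $\ba{B}$ be countable and atomless, and enumerate $\ba{B} = \{b_0, b_1, \ldots\}$. I will build an increasing chain of finite subalgebras $\ba{B}_0 \subseteq \ba{B}_1 \subseteq \cdots$ of $\ba{B}$ together with compatible isomorphisms $\ba{F}_{[n]} \xrightarrow{\sim} \ba{B}_n$. Two conditions will be imposed:
\begin{itemize}
\item every atom of $\ba{B}_n$ splits into exactly two atoms of $\ba{B}_{n+1}$;
\item $b_n \in \ba{B}_{m}$ for some $m$.
\end{itemize}

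The inductive step works as follows. Given $\ba{B}_n$ with atoms $a_1,\dots,a_{2^n}$, choose for each $j$ an element $0 < c_j < a_j$. This is possible because $\ba{B}$ is atomless; compare \zcref{lem:atomlessprincipalideal}. Set $g_n \coloneqq \bigvee_j c_j$. Then $\ba{B}_{n+1} \coloneqq \langle \ba{B}_n, g_n\rangle$ has atoms $a_j \meet g_n$ and $a_j \meet \neg g_n$, all nonzero. So sending the new free generator to $g_n$ extends the isomorphism to $\ba{F}_{[n+1]} \cong \ba{B}_{n+1}$, by the universal property of free algebras and a cardinality count.

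The main obstacle is making sure every $b_k$ eventually lands in the union. To handle it, at stage $n$ I let $b$ be the first enumerated element not yet in $\ba{B}_n$. I choose the $c_j$ so that $b \meet a_j$ lies in $\langle \ba{B}_n, g_n \rangle$, which forces $b \in \ba{B}_{n+1}$. Concretely, take $c_j \coloneqq b \meet a_j$ whenever $0 < b\meet a_j < a_j$. When $b \meet a_j \in \{0, a_j\}$, take $c_j$ arbitrary with $0 < c_j < a_j$; in that case $b \meet a_j$ is already in $\ba{B}_n$. Then
\[
b = \bigvee_j (b\meet a_j) \in \ba{B}_{n+1}.
\]

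Finally, the isomorphisms assemble into an injective homomorphism $\ba{F}_\omega = \colim_n \ba{F}_{[n]} \to \ba{B}$. Its image is $\bigcup_n \ba{B}_n = \ba{B}$, so it is an isomorphism.
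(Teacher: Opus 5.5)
Your proof is correct, but it takes a different route from the paper. The paper gives no argument of its own: it cites Givant--Halmos for the atomlessness of $\ba{F}_{\omega}$, and then deduces (2) from (1) together with a cited uniqueness result for countable atomless Boolean algebras, classically proved by a symmetric back-and-forth.

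You instead give a self-contained argument that uses only the finite description of $\ba{F}_{[n]}$ and its atoms from \zcref{exa:atomsFn} and \zcref{rem:coproductFw}. For (2), the key simplification is that a forth-only construction suffices. Each stage $\ba{F}_{[n]} \xrightarrow{\sim} \ba{B}_n$ is defined on all of $\ba{F}_{[n]}$, so the domain side is exhausted automatically, and the only bookkeeping needed is on the $\ba{B}$ side. Your choice $c_j = b \meet a_j$ handles that bookkeeping. Because the single element $g_n = \bigvee_j c_j$ splits every atom of $\ba{B}_n$ into two nonzero pieces, $\ba{B}_{n+1}$ is freely generated over $\ba{B}_n$ by one element, so the extension to $\ba{F}_{[n+1]}$ is forced.

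Your inductive step is the first step of the proof of \zcref{prop:splittingprinciple} with two refinements: you package the splitting elements into one new generator, and you choose them so that the chain exhausts $\ba{B}$. The citation is shorter and yields the general symmetric statement. Your route produces an explicit isomorphism compatible with the filtration $\ba{F}_{[0]} \subseteq \ba{F}_{[1]} \subseteq \cdots$, which is exactly the presentation of $\ba{F}_{\omega}$ used in \zcref{exa:HSring}.

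Two details should be stated explicitly. First, an element $b \notin \ba{B}_n$ always exists, because an atomless algebra is infinite while $\ba{B}_n$ is finite. Second, the index of the first missing element strictly increases with $n$, so $b_k \in \ba{B}_{k+1}$ for every $k$.
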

    \begin{proof}
       Part (1) is shown in \cite[Corollary 28.2]{GivantHalmos2009Introduction}, and Part (2) then follows from \cite[Theorem 10.2]{GivantHalmos2009Introduction}. 
    \end{proof}
    
\begin{Exa}\label{exa:atomlessBA}
    Atomless Boolean algebras exist in profusion. They can easily be constructed using perfect Stone spaces, i.e., those without isolated points; see \zcref{rem:isolated-points}. Let $\kappa$ be an infinite cardinal. Under Stone duality, the free Boolean algebra $\ba{F}_\kappa$ on $\kappa$ generators corresponds to the Cantor cube $2^\kappa$, and hence $\ba{F}_\kappa \cong \Clop(2^\kappa)$. Since every basic open subset of $2^\kappa$ restricts only finitely many coordinates, the space $2^\kappa$ has no isolated points. It follows that $\ba{F}_\kappa$ is atomless.

    Another example arises from the Stone--\v{C}ech compactification (\zcref{exa:stonecech}) of a discrete space $X$. Viewing $X$ as a subset of $\beta X$ via the principal ultrafilters on the points of $X$, the complement $\beta X \setminus X$ is a perfect Stone space. The associated atomless Boolean algebra can be identified with $\ba{P}(X)/\mathrm{Fin}$, where $\mathrm{Fin}$ is the ideal generated by the singletons of $X$, see \cite[Example~5.28(b)]{Koppelberg1989Handbook}.
\end{Exa}
    
    The key property of atomless Boolean algebras we will use in the next section is the abundance of independent atomless subalgebras:
    
    \begin{Prop}\label{prop:splittingprinciple}
        Every atomless Boolean algebra contains a pair of independent atomless subalgebras.
    \end{Prop}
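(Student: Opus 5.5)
The plan is to reduce to the countable case using \zcref{thm:omegafree}, and then to transport a splitting of $\ba{F}_\omega$ into an arbitrary atomless $\ba{B}$.

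\textbf{Step 1: a countable atomless subalgebra.} Let $\ba{B}$ be atomless. We build a countable atomless subalgebra $\ba{C} \subseteq \ba{B}$ by a closure argument.
\begin{enumerate}
    \item Start with $\ba{C}_0 = \{0,1\}$.
    \item Given a countable subalgebra $\ba{C}_n$, choose for each nonzero $c \in \ba{C}_n$ an element $y_c \in \ba{B}$ with $0 < y_c < c$. This is possible because $\ba{B}$ is atomless.
    \item Let $\ba{C}_{n+1}$ be the subalgebra generated by $\ba{C}_n$ together with all the $y_c$. It is again countable.
    \item Set $\ba{C} = \bigcup_n \ba{C}_n$.
\end{enumerate}
Then $\ba{C}$ is countable. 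It is also atomless: every nonzero $c \in \ba{C}$ lies in some $\ba{C}_n$, so the element $0 < y_c < c$ lies in $\ba{C}_{n+1} \subseteq \ba{C}$. This step uses the axiom of choice, which is harmless here. By \zcref{thm:omegafree}(2), $\ba{C} \cong \ba{F}_\omega$.

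\textbf{Step 2: split $\ba{F}_\omega$.} By \zcref{rem:coproductFw}, a bijection $\bbN \amalg \bbN \cong \bbN$ gives an isomorphism $\ba{F}_\omega \oplus \ba{F}_\omega \cong \ba{F}_\omega$. Concretely, the two subalgebras generated by the even-indexed and odd-indexed free generators are each isomorphic to $\ba{F}_\omega$, hence atomless by \zcref{thm:omegafree}(1). Their join is the whole algebra, and the canonical map from their coproduct to this join is the isomorphism above. By the equivalent characterization in the definition of independence, these two subalgebras are independent.

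If one wants to avoid citing the equivalence, independence can be checked directly. A nonzero element of the even subalgebra contains a nonzero atom-type conjunction $\bigwedge g_{2i}^{\epsilon_i}$ of finitely many even generators, and similarly for the odd subalgebra. The meet of two such conjunctions is a conjunction over disjoint sets of generators, which is nonzero in a free algebra (equivalently, it is a nonempty cylinder set in Cantor space $2^{\bbN}$).

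\textbf{Step 3: transport back to $\ba{B}$.} Take the images $\ba{A}_1, \ba{A}_2 \subseteq \ba{C} \subseteq \ba{B}$ of the two subalgebras from Step 2. Both properties carry over:
\begin{enumerate}
    \item Atomlessness is intrinsic to a Boolean algebra, so each $\ba{A}_i$ is atomless.
    \item Independence only involves the meet of nonzero elements, which is computed identically in $\ba{C}$ and in $\ba{B}$, so $\ba{A}_1$ and $\ba{A}_2$ are independent in $\ba{B}$.
\end{enumerate}

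I expect the only mildly delicate point to be Step 1, specifically ensuring that the constructed subalgebra is atomless. The iterated closure argument handles this.
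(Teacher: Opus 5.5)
Your proof is correct and takes essentially the same route as the paper. You extract a countable atomless subalgebra by iterated splitting, identify it with $\ba{F}_\omega$ via \zcref{thm:omegafree}, and transport the splitting $\ba{F}_\omega \oplus \ba{F}_\omega \cong \ba{F}_\omega$ of \zcref{rem:coproductFw} back into $\ba{B}$. The differences are cosmetic: the paper uses finite stages and splits only atoms, while you split every nonzero element of a countable stage, and your optional cylinder-set check of independence avoids the cited equivalence from Givant--Halmos.
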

    \begin{proof}
        The first step is to find a countable atomless Boolean subalgebra of $\ba{B}$. Indeed, proceeding inductively, we begin by constructing a sequence of finite subalgebras $\ba{B}_0 \subset \ba{B}_1 \subset \ba{B}_2 \subset \ba{B}_3 \subset \ldots \subset \ba{B}$. Let $\ba{B}_0 = \{0,1\}$ and suppose that the sequence has been constructed up to $\ba{B}_n$. For each $a \in \At(\ba{B}_n)$, atomlessness of $\ba{B}$ gives some $c_a \in \ba{B}$ with $0 < c_a < a$. Let $\ba{B}_{n+1}$ be the finite Boolean subalgebra of $\ba{B}$ generated by $\ba{B}_{n}$ and $\{c_a\mid a \in \At(\ba{B}_n)\}$. Finally, we set
            \[
                \ba{A} \coloneqq \bigcup_{i \geq 0}\ba{B}_i \subseteq \ba{B}.
            \]
        By construction, $\ba{A}$ is a countable Boolean subalgebra of $\ba{B}$. It is also atomless: Indeed, any nonzero $x \in \ba{A}$ comes from a finite stage $x \in \ba{B}_n$, thus is not an atom in $\ba{B}_{n+1}$ and hence not in $\ba{A}$ either.
    
        Combining \zcref{thm:omegafree} and \zcref{rem:coproductFw}, we obtain a (non-canonical) isomorphism $\theta \colon \ba{F}_{\omega} \oplus \ba{F}_{\omega} \xrightarrow{\sim} \ba{A}$. Let $\iota_i\colon\ba{F}_{\omega}\hookrightarrow\ba{F}_{\omega}\oplus\ba{F}_{\omega}$ denote the canonical inclusions, and set $\ba{A}_i\coloneqq\theta(\iota_i(\ba{F}_{\omega}))$ for $i=1,2$. Each $\ba{A}_i$ is isomorphic to $\ba{F}_{\omega}$, and is therefore countable and atomless. Moreover, the canonical morphism
        \[
        \ba{A}_1\oplus\ba{A}_2\longrightarrow\langle\ba{A}_1,\ba{A}_2\rangle_{\ba{A}}=\ba{A}
        \]
        identifies with $\theta$ and is therefore an isomorphism. Hence $\ba{A}_1$ and $\ba{A}_2$ are independent. Since $\ba{A}\subseteq\ba{B}$, they are also independent as subalgebras of $\ba{B}$.
    \end{proof}
    
    \begin{Rem}
        Under Stone duality, an embedding $\ba{F}_{\omega} \hookrightarrow \ba{B}$ corresponds to a continuous surjection $X_{\ba{B}} \twoheadrightarrow 2^{\bbN}$ from the space representing $\ba{B}$ to the Cantor space. Such a map can be constructed by iteratively subdividing clopen subsets $X_{\ba{B}} $ into pairs of non-trivial clopen subsets, which is always possible when $X_{\ba{B}}$ does not contain any isolated points. The two independent subalgebras correspond geometrically to the splitting $2^{\bbN} \cong 2^{\bbN} \times 2^{\bbN}$, e.g., obtained by separating odd and even indices in a string representing elements of Cantor space.
    \end{Rem}
    
    \section{Nil-Cantor rings}\label{sec:rings}

    For simplicity, throughout most of this section we restrict to coefficients in a field $k$; however, the constructions provided below and their essential features generalize to any nonzero coefficient commutative ring.

    \subsection{Construction}\label{ssec:construction}
    
    We first construct a functor on \emph{finite} Boolean algebras and injective homomorphisms taking values in the category of commutative $k$-algebras. If $\ba{B}$ is a finite Boolean algebra, set $\R{\ba{B}} = \bigoplus_{b \in \ba{B}}ke_b$ as a $k$-vector space, equipped with the multiplication extended $k$-linearly from the formula
        \begin{equation}\label{eq:multiplication}
            e_b\cdot e_c = 
                \begin{cases}
                    e_{b \join c} & \text{if } b \meet c = 0 \\
                    0 & \text{otherwise},
                \end{cases}
        \end{equation}
    for any $b,c \in \ba{B}$. The unit in $\R{\ba{B}}$ is $1 = e_0$ and associativity follows from distributivity in $\ba{B}$. Since $b = \bigvee_{a \in \At(\ba{B}), a \leq b}a$, we deduce that $e_b = \prod_{a \in \At(\ba{B}), a \leq b}e_a$. If we denote the atoms in $\ba{B}$ by $\{a_1,\ldots,a_n\} = \At(\ba{B})$, then there is a canonical isomorphism
        \begin{equation}\label{eq:finite-ring}
            k[x_1,\ldots,x_n]/(x_i^2) \xlongrightarrow{\sim} \R{\ba{B}},
    \end{equation}
    sending $x_i$ to $e_{a_i}$. If $k = \bbF_2$, this identifies with an exterior algebra on $n$ generators. An injective homomorphism $f\colon \ba{B}_1 \to \ba{B}_2$ induces a map $\R{f}\colon \R{\ba{B}_1} \to \R{\ba{B}_2}$ of $k$-algebras determined by the assignment $e_b \mapsto e_{f(b)}$ for any $b \in \ba{B}_1$; here, injectivity is required for $\R{f}$ to be a homomorphism of $k$-algebras.
    
    This construction extends to arbitrary Boolean algebras and injective homomorphisms between them, as follows: Any Boolean algebra $\ba{B}$ can be written as a union $\ba{B} = \bigcup_{i \in I}\ba{B}_i$ of its finite Boolean subalgebras $\ba{B}_i \subseteq \ba{B}$. Applying $\R{-}$ to the corresponding directed diagram, we can extend $\R{-}$ to any Boolean algebra: 
        \begin{equation}\label{eq:colimitring}
            \R{\ba{B}} \coloneqq \colim_{i \in I} \R{\ba{B}_i}.
        \end{equation}
    More explicitly, $\R{\ba{B}}$ has a $k$-basis given by $\{e_{b}\mid b \in \ba{B}\}$ with multiplication determined again by \zcref{eq:multiplication}. In particular, $e_0$ is the unit in $\R{\ba{B}}$.

    \begin{Def}\label{def:nilCantorring}
        Given a Boolean algebra $\ba{B}$, we call $\R{\ba{B}}$ the \emph{nil-Cantor ring}\footnote{This terminology was suggested to us by Dave Benson.} of $\ba{B}$.
    \end{Def}
    
    \begin{Rem}
        A related construction appears in \cite[p.~50]{CrapoSchmitt2008Primitive}. Let $\cat U$ denote the set of finite subsets of an infinite set $X$. Crapo--Schmitt define $k\{ \cat U \}$ to be the $k$-algebra with basis $\cat U$, and multiplication
        \[
        ST = \begin{cases}
            S \cup T, & \text{ if } S \cap T = \varnothing, \\
            0 & \text{ otherwise},
        \end{cases}
        \]
        for all $S,T \in \cat U$. In particular, 
        \[
        k\{ \cat U\} \cong k[x_i\mid i \in X]/(x_i^2)
        \]
        where $x_i$ corresponds to the basis element indexed by the singleton $\{i \} \in \cat U$. 
    \end{Rem}
    
    \begin{Exa}\label{exa:HSring}
        Let us unpack the construction of $\R{\ba{B}}$ for the simplest nonzero example of atomless $\ba{B}$, namely $\ba{B} = \ba{F}_{\omega}$. By \zcref{rem:coproductFw}, we have  $\ba{F}_{\omega} = \colim_n\ba{P}\ba{P}([n])$ with transition maps induced by the natural inclusions $[n] \to [n+1]$. Using \zcref{exa:atomsFn}, the atoms of $\ba{F}_{[n]} \cong \ba{P}\ba{P}([n])$ are indexed on subsets $S \in \ba{P}([n])$. Therefore, via the isomorphism of \zcref{eq:finite-ring} specialized to $\ba{F}_{[n]}$, the transition maps are given by
    \[\begin{tikzcd}[cramped]
    	{\R{\ba{F}_{[n]}}} & {k[x_S\colon S \in \ba{P}([n])]/(x_S^2)} & {x_S} \\
    	{\R{\ba{F}_{[n+1]}}} & {k[y_{S'}\colon {S'} \in \ba{P}([n+1])]/(y_{S'}^2)} & {y_Sy_{S \cup \{n\}}.}
    	\arrow["\sim", from=1-1, to=1-2]
    	\arrow[from=1-1, to=2-1]
    	\arrow[from=1-2, to=2-2]
    	\arrow[maps to, from=1-3, to=2-3]
    	\arrow["\sim"', from=2-1, to=2-2]
    \end{tikzcd}\]
        Represent $\ba{P}([n])$ as a length $n$ string of $0$s and $1$s and write $2^{<\bbN}$ for the set of finite strings of 0's and 1's. Taking the colimit over $n$ in the above presentation then induces an isomorphism
            \[  
                \R{\ba{F}_{\omega}} \cong k[T_S \mid S \in 2^{<\bbN}]/(T_S-T_{S \ast 0} \cdot T_{S \ast 1},T_S^2 \mid S \in 2^{<\bbN}),
            \]
        where $S\ast i$ denotes the concatenation of $S$ with $i\in\{0,1\}$. Consequently, for $k = \bbF_2$, the ring $\R{\ba{F}_{\omega}}$ coincides with the third ring described in the introduction of \cite{hebestreit2024note}.
    \end{Exa}
    \begin{Lem}\label{lem:localring}
        There is an augmentation $\varepsilon\colon \R{\ba{B}} \to k$ whose kernel $\mathfrak{m}_{\ba{B}}\coloneqq \ker(\varepsilon)$ is the unique prime ideal in $\R{\ba{B}}$. In particular, $\R{\ba{B}}$ is local.
    \end{Lem}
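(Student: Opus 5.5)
We will define $\varepsilon\colon \R{\ba{B}} \to k$ on the $k$-basis by $\varepsilon(e_0) = 1$ and $\varepsilon(e_b) = 0$ for $b \neq 0$, extended $k$-linearly. Then we will check multiplicativity on basis elements using \zcref{eq:multiplication}. If $b = c = 0$, then $e_0 e_0 = e_0$ and both sides equal $1$. If one of $b,c$ is nonzero, then $e_b e_c$ is either $0$ or $e_{b\join c}$ with $b \join c \neq 0$, so $\varepsilon(e_be_c) = 0 = \varepsilon(e_b)\varepsilon(e_c)$. Since $\varepsilon(1) = \varepsilon(e_0) = 1$, $\varepsilon$ is a surjective $k$-algebra homomorphism. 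Its kernel $\mathfrak{m}_{\ba{B}}$ is the $k$-span of $\{e_b \mid b \neq 0\}$, and it is maximal because the quotient is the field $k$.

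Next we will show that every element of $\mathfrak{m}_{\ba{B}}$ is nilpotent, so that $\mathfrak{m}_{\ba{B}} \subseteq \sqrt{0}$. For a single basis element with $b \neq 0$, we have $b \meet b = b \neq 0$, hence $e_b^2 = 0$. A general element $x \in \mathfrak{m}_{\ba{B}}$ is a finite $k$-linear combination of such $e_b$. In a commutative ring, a sum of nilpotent elements is nilpotent, since the nilradical is an ideal; hence $x$ is nilpotent. Alternatively, $x$ lies in $\R{\ba{B}_i}$ for some finite subalgebra $\ba{B}_i$ by \zcref{eq:colimitring}, where it is a polynomial without constant term in $n$ square-zero variables via \zcref{eq:finite-ring}, so $x^{n+1}=0$.

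To conclude, every prime ideal $\mathfrak p$ contains $\sqrt{0} \supseteq \mathfrak{m}_{\ba{B}}$. Since $\mathfrak{m}_{\ba{B}}$ is maximal and $\mathfrak p$ is proper, $\mathfrak p = \mathfrak{m}_{\ba{B}}$. So $\mathfrak{m}_{\ba{B}}$ is the unique prime ideal, and in particular $\R{\ba{B}}$ is local. One caveat is that $\R{\ba{B}}$ must be nonzero; this holds because $e_0 \neq 0$ is a basis element.

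No step is a serious obstacle. The only point needing care is verifying that $\varepsilon$ is multiplicative, which reduces to observing that $e_be_c$ never has a nonzero coefficient on $e_0$ unless $b=c=0$, since $b\join c = 0$ forces $b=c=0$.
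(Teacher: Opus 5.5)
Your proof is correct and follows essentially the same approach as the paper: the same augmentation $\varepsilon$, kernel spanned by the $e_b$ with $b \neq 0$, nilpotence of every element of $\mathfrak{m}_{\mathbb{B}}$, and the conclusion that this maximal ideal is the unique prime. One small variation is your main nilpotence argument, that each $e_b$ with $b\neq 0$ satisfies $e_b^2=0$ and the nilradical is an ideal, which is a bit more direct than the paper's passage to a finite subalgebra; your alternative bound $x^{n+1}=0$ is also sharper than the paper's exponent $\dim_k+1$, though nothing depends on this.
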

    \begin{proof}
        Define the map $\varepsilon\colon \R{\ba{B}} \to k$ on basis elements by setting
            \[
                \varepsilon(e_b) = 
                    \begin{cases}
                        0 & \text{if } b \neq 0; \\
                        1 & \text{if } b=0
                    \end{cases}
            \]
        and then extending $k$-linearly. We see that $\varepsilon$ provides a surjective $k$-algebra homomorphism, i.e., it is an augmentation. Consequently, the kernel is given by the ideal
            \begin{equation}\label{eq:maxideal}
                \mathfrak{m}_{\ba{B}} = \ker(\varepsilon) = \langle \{e_b \mid b \in \ba{B}\setminus\{0\}\}\rangle.
            \end{equation}
        Any element $x \in \mathfrak{m}_{\ba{B}}$ comes from some finite-dimensional subalgebra $\R{\ba{B}_i}$, and thus will be nilpotent of exponent $\dim_{k}(\R{\ba{B}_i})+1$. Therefore, $\mathfrak{m}_{\ba{B}}$ coincides with the nil-radical of $\R{\ba{B}}$, so $\R{\ba{B}}$ has unique prime ideal $\mathfrak{m}_{\ba{B}}$.
    \end{proof}
    
    \begin{Rem}\label{rem:localringalt}
        Alternatively, \zcref{lem:localring} also follows directly from \zcref{eq:colimitring}. In fact, continuity of the Zariski spectrum together with \zcref{eq:finite-ring} imply that 
            \[
                \Spec(\R{\ba{B}}) = \{\mathfrak{m}_{\ba{B}}\}.
            \]
        It thus follows from the Hopkins--Neeman--Thomason theorem \cite{Hopkins1987,Neeman1992Chromatic,Thomason1997} that $\Spc(\Der(\R{\ba{B}})^{\omega}) \cong \{\ast\}$. 
    \end{Rem}
    
    \begin{Lem}\label{lem:tangent}
        For any Boolean algebra $\ba{B}$, we have an isomorphism of $k$-vector spaces
            \[
                \mathfrak{m}_{\ba{B}}/\mathfrak{m}_{\ba{B}}^2 \cong \bigoplus_{a \in \At(\ba{B})}k\overline{e_a},
            \]
        where $\overline{e_a}$ denotes the image of $e_a$ under the evident quotient map. In particular, $\ba{B}$ is atomless if and only if $\mathfrak{m}_{\ba{B}} \in \Idem{\R{\ba{B}}}$.
    \end{Lem}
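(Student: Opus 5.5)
We aim to compute $\mathfrak{m}_{\ba{B}}^2$ explicitly as the $k$-span of $\{e_b \mid b \neq 0,\ b \notin \At(\ba{B})\}$. From this the isomorphism follows at once. Indeed, by \zcref{eq:maxideal}, $\mathfrak{m}_{\ba{B}}$ has $k$-basis $\{e_b \mid b \neq 0\}$, since it is the kernel of $\varepsilon$, which kills exactly these basis vectors. Once the claim about $\mathfrak{m}_{\ba{B}}^2$ is established, the quotient has basis given by the images $\overline{e_a}$ with $a \in \At(\ba{B})$.

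The key steps run as follows.

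First, $\mathfrak{m}_{\ba{B}}^2$ is spanned by the products $e_b e_c$ with $b,c \neq 0$. By \zcref{eq:multiplication}, each such product is either $0$ or $e_{b \join c}$ with $b \meet c = 0$. In the latter case $b \join c$ is a disjoint join of two nonzero elements, so it is strictly above both and hence is not an atom. Consequently $\mathfrak{m}_{\ba{B}}^2 \subseteq \operatorname{span}\{e_d \mid d \neq 0 \text{ non-atom}\}$.

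Second, for the reverse inclusion, let $d \neq 0$ be a non-atom. Then there exists $0 < b < d$. Set $c \coloneqq d \meet \neg b$; this is nonzero because $b < d$. We have $b \meet c = 0$ and $b \join c = d$, so $e_d = e_b e_c \in \mathfrak{m}_{\ba{B}}^2$.

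Combining the two steps, $\mathfrak{m}_{\ba{B}}^2$ is precisely the span of the basis vectors $e_d$ indexed by nonzero non-atoms. Since $\{e_b\}$ is a basis, the quotient $\mathfrak{m}_{\ba{B}}/\mathfrak{m}_{\ba{B}}^2$ has basis $\{\overline{e_a} \mid a \in \At(\ba{B})\}$, as claimed.

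For the final assertion, $\mathfrak{m}_{\ba{B}}$ is idempotent if and only if $\mathfrak{m}_{\ba{B}} = \mathfrak{m}_{\ba{B}}^2$, that is, if and only if the quotient vanishes. By the isomorphism just proved, this happens exactly when $\At(\ba{B}) = \emptyset$.

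There is no real obstacle here. The only point requiring care is the first step: we must make sure that a span of basis vectors is really all of $\mathfrak{m}_{\ba{B}}^2$, with no linear combinations of atom basis vectors sneaking in. This holds because products of basis elements are again basis elements (or zero), so $\mathfrak{m}_{\ba{B}}^2$ is spanned by a subset of the basis.
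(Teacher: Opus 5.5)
Your proposal is correct and follows the paper's proof essentially verbatim: you show that $\mathfrak{m}_{\ba{B}}^2$ is exactly the span of the $e_b$ with $b$ a nonzero non-atom, using the same two inclusions, including the same splitting $d = b \join (d \meet \neg b)$. You then read off the quotient and the atomless criterion. Your remark that $\mathfrak{m}_{\ba{B}}^2$ is spanned by a subset of the basis, because products of basis vectors are basis vectors or zero, is the one point the paper leaves implicit.
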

    \begin{proof}
        We first prove the auxiliary claim that, as $k$-vector spaces, we have
            \[
                \mathfrak{m}_{\ba{B}}^2 = \bigoplus_{ 0 \neq b \notin \At(\ba{B})}ke_b.
            \]
        The nonzero products $e_c \cdot e_d$ for nonzero $c,d$ must be of the form $e_b$ for $b = c \join d \neq 0$ and $c \meet d = 0$. In particular, $b$ cannot be an atom. To see the reverse inclusion, consider some nonzero $b \notin \At(\ba{B})$, so there exists some $c \in \ba{B}$ with $0 < c < b$. Set $d = b \meet \lnot c$. This implies that both $c$ and $d$ are nonzero, $c\meet d = 0$, and $c \join d = b$. Therefore, we have $e_b = e_c\cdot e_d$, as desired.
    
        The description of the quotient $\mathfrak{m}_{\ba{B}}/\mathfrak{m}_{\ba{B}}^2$ is then an immediate consequence, and the characterization of atomlessness of $\ba{B}$ follows.
    \end{proof}
    
    \subsection{Idempotent ideals}\label{ssec:idempotentideals}
    
    Let $\ba{B}$ be an atomless Boolean algebra and fix some nonzero $b \in \ba{B}$. For every atomless subalgebra $\ba{A}$ of $\ba{B}_b$ let 
        \begin{equation}\label{eq:idealmachine}
            \ideal{b}(\ba{A}) \coloneqq \langle \{ e_a \mid 0 \neq a \in \ba{A}\}\rangle_{\R{\ba{B}}}
        \end{equation}
    be the ideal in $\R{\ba{B}}$ generated by the elements $e_a$ for $a$ running through the nonzero elements of $\ba{A}$. A $k$-vector space basis for $\ideal{b}(\ba{A})$ in $\R{\ba{B}}$ is given by the $e_x$ for all $x$ in the upward closure of $\ba{A}\setminus\{0\}$ inside $\ba{B}$. Indeed, the span of these basis elements is an ideal. Conversely, if $0 < a \leq x$ with $ a \in \ba{A}$, then $e_x=e_a \cdot e_{x\meet\neg a}$, so $e_x$ belongs to the ideal generated by the $e_a$. As a partial generalization of \zcref{lem:tangent}, this produces a good supply of idempotent ideals in $\R{\ba{B}}$:
    
    \begin{Lem}\label{lem:idempotentidealmachine}
        With notation as above, $\ideal{b}(\ba{A})$ is an idempotent ideal in $\R{\ba{B}}$.
    \end{Lem}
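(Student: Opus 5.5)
It suffices to show $\ideal{b}(\ba{A}) \subseteq \ideal{b}(\ba{A})^2$, since the reverse inclusion holds for any ideal. Because $\ideal{b}(\ba{A})$ is generated by the elements $e_a$ with $0 \neq a \in \ba{A}$, it is enough to show that each such generator lies in $\ideal{b}(\ba{A})^2$. The argument mirrors the reverse inclusion in the proof of \zcref{lem:tangent}, with the atomlessness of $\ba{B}$ replaced by that of the subalgebra $\ba{A}$.

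Fix a nonzero $a \in \ba{A}$. Since $\ba{A}$ is atomless, $a$ is not an atom of $\ba{A}$, so there exists $c \in \ba{A}$ with $0 < c < a$. Set $d \coloneqq a \meet \neg_b c$, where the complement is taken in $\ba{A}$; since $\ba{A}$ is a subalgebra of $\ba{B}_b$, this is $b \meet \neg c \meet a = a \meet \neg c$. In particular $d \in \ba{A}$. It is nonzero because $c < a$, and it satisfies $c \meet d = 0$ and $c \join d = a$. By the multiplication rule \zcref{eq:multiplication}, we get $e_a = e_c \cdot e_d$. Both $e_c$ and $e_d$ are generators of $\ideal{b}(\ba{A})$, so $e_a \in \ideal{b}(\ba{A})^2$. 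Hence $\ideal{b}(\ba{A}) = \ideal{b}(\ba{A})^2$.

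The only point requiring care is that the element $d$ lies in $\ba{A}$ and not merely in $\ba{B}$. This uses that $\ba{A}$ is closed under the relative complement $\neg_b$ and under meets in $\ba{B}_b$ (\zcref{def:principal-ideal}), and that these operations agree with those computed in $\ba{B}$. Everything else is immediate. Note that the hypothesis $\ba{A} \subseteq \ba{B}_b$ plays no role in idempotency beyond making sense of $\ba{A}$ as a subalgebra.
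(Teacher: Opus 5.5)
Your proposal is correct and follows the same approach as the paper: use atomlessness of $\ba{A}$ to split each nonzero generator as $a = c \join d$ with $c \meet d = 0$ and $c,d \in \ba{A}\setminus\{0\}$, so that $e_a = e_c\cdot e_d \in \ideal{b}(\ba{A})^2$. Your explicit construction $d = a \meet \neg_b c = a \meet \neg c$, with the check that $d \in \ba{A}$, is just a more detailed version of the paper's one-line splitting step.
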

    \begin{proof}
        Consider some nonzero $a \in \ba{A}$. Since $\ba{A}$ is atomless, there exist nonzero  $a_1,a_2 \in \ba{A}$ with $a_1 \meet a_2 = 0$ and $a_1 \join a_2 = a$. Consequently, $e_a = e_{a_1}\cdot e_{a_2}$. This gives the inclusion $\ideal{b}(\ba{A}) \subseteq \ideal{b}(\ba{A})^2$, while the reverse inclusion holds unconditionally.
    \end{proof}
    
    \begin{Exa}
        Taking $b=1$ and $\ba{A} = \ba{B}$ itself, we get back $\ideal{1}(\ba{B}) = \mathfrak{m}_{\ba{B}}$.
    \end{Exa}
    
    \begin{Lem}\label{lem:idempotentidealmachine2}
        For any atomless Boolean algebra $\ba{B}$, the assignment $0 \mapsto (0)$ and $b \mapsto \ideal{b}(\ba{B}_b)$ for $b \ne 0$ provides an order embedding $\ba{B} \hookrightarrow \Idem{\R{\ba{B}}}$. In particular, the set of idempotent ideals of $\R{\ba{B}}$ has at least the cardinality of  $\ba{B}$.
    \end{Lem}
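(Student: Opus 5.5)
We need to check three things: the map $b \mapsto \ideal{b}(\ba{B}_b)$ is well defined, it is order-preserving, and it reflects order (hence is injective). The cardinality claim is then immediate. For well-definedness, recall that for $b \neq 0$, \zcref{lem:atomlessprincipalideal} says $\ba{B}_b$ is an atomless Boolean algebra (with top element $b$). Although $\ba{B}_b$ is not literally a Boolean subalgebra of $\ba{B}$ (its top is $b$), it is atomless and sits inside $\ba{B}_b$ in the sense required in \zcref{eq:idealmachine}. The argument of \zcref{lem:idempotentidealmachine} only uses that each nonzero element of $\ba{A}$ splits as a disjoint join of two nonzero elements of $\ba{A}$, so it applies verbatim to $\ba{A} = \ba{B}_b$. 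Hence $\ideal{b}(\ba{B}_b)$ is idempotent; trivially $(0)$ is idempotent too.

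Next, describe the ideal explicitly. By the basis description preceding \zcref{lem:idempotentidealmachine}, $\ideal{b}(\ba{B}_b)$ has $k$-basis the $e_x$ with $x$ in the upward closure of $\ba{B}_b \setminus \{0\}$. In other words,
\[
\ideal{b}(\ba{B}_b) = \bigoplus_{x \in \ba{B},\ x \meet b \neq 0} k e_x ,
\]
since $x$ lies above some nonzero $y \le b$ exactly when $x \meet b \ne 0$ (take $y = x \meet b$). This formula also gives $(0)$ at $b = 0$, which makes the bookkeeping uniform.

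Monotonicity and order reflection then reduce to comparing index sets, using that the $e_x$ form a basis of $\R{\ba{B}}$. If $b \le c$ then $x \meet b \ne 0$ implies $x \meet c \ne 0$, so $\ideal{b} \subseteq \ideal{c}$; this includes the case $b = 0$. Conversely, suppose $b \not\le c$. Then $x \coloneqq b \meet \neg c$ is nonzero, and it satisfies $x \meet b = x \ne 0$ but $x \meet c = 0$. Hence $e_x$ lies in the ideal for $b$ but not in the ideal for $c$, the latter because a basis vector lies in a span of basis vectors only if it is one of them. So the map reflects order and is therefore an order embedding; in particular it is injective, which gives the cardinality bound.

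The only point needing care is the use of \zcref{lem:idempotentidealmachine} with $\ba{A} = \ba{B}_b$ itself rather than a subalgebra with the same top. I would handle this by rerunning its two-line proof directly: for nonzero $a \le b$, atomlessness of $\ba{B}$ gives $0 < a_1 < a$, and setting $a_2 = a \meet \neg a_1$ yields $e_a = e_{a_1} e_{a_2}$ with $a_1, a_2 \in \ba{B}_b \setminus \{0\}$. Everything else is direct basis bookkeeping.
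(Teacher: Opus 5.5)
Your proposal is correct and follows essentially the same route as the paper: well-definedness via \zcref{lem:atomlessprincipalideal} and \zcref{lem:idempotentidealmachine}, the basis criterion $e_x \in \ideal{b}(\ba{B}_b) \iff x \meet b \neq 0$, monotonicity by comparing index sets, and order reflection via a test element disjoint from $c$. The only cosmetic differences are that the paper uses $\neg c$ as the test element where you use $b \meet \neg c$, and it treats $b=0$ separately where your uniform formula covers it.
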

    \begin{proof}
        Any nonzero principal ideal in an atomless Boolean algebra is itself atomless (\zcref{lem:atomlessprincipalideal}), so \zcref{lem:idempotentidealmachine} guarantees that the map indeed takes values in idempotent ideals of $\R{\ba{B}}$. The claims concerning the zero element are immediate, so it remains to consider nonzero elements $b,c\in\ba{B}$. By our above discussion, a basis element $e_x \in \ideal{b}(\ba{B}_b)$ if and only if $0 < x \meet b$. Consequently, we have $\ideal{b}(\ba{B}_b) \subseteq \ideal{c}(\ba{B}_c)$ if and only if $x \meet b \neq 0$ implies $x \meet c \neq 0$ for all $x \in \ba{B}$.
    
        If $b \leq c$, then $b \meet x \leq c \meet x$ for any $x \in \ba{B}$, which shows that our map is order preserving. In order to see that it also reflects the order, assume $\ideal{b}(\ba{B}_b) \subseteq \ideal{c}(\ba{B}_c)$. Since $\lnot c \meet c = 0$, the criterion above gives $\lnot c \meet b = 0$, which in turn implies $b = (\lnot c \join c) \meet b = c \meet b$. This gives $b \leq c$, as claimed.
    \end{proof}
    
    \begin{Lem}\label{lem:orthogonalideals}
        Suppose $\ba{B}$ is an atomless Boolean algebra. For each nonzero $b \in \ba{B}$ there exist ideals $J_b(1), J_b(2) \in \Idem{\R{\ba{B}}}$ such that
            \begin{enumerate}
                \item $e_b \in J_b(i)$ for $i=1,2$; 
                \item $J_b(1)J_b(2) = 0$.
            \end{enumerate}
    \end{Lem}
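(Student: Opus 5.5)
The plan is to apply \zcref{prop:splittingprinciple} to the atomless Boolean algebra $\ba{B}_b$, which is atomless by \zcref{lem:atomlessprincipalideal}. This gives two independent atomless subalgebras $\ba{A}_1,\ba{A}_2 \subseteq \ba{B}_b$. We then set $J_b(i) \coloneqq \ideal{b}(\ba{A}_i)$ for $i=1,2$. By \zcref{lem:idempotentidealmachine}, both are idempotent ideals of $\R{\ba{B}}$. Since the top element $b$ of $\ba{B}_b$ lies in each subalgebra $\ba{A}_i$ and is nonzero, $e_b$ is one of the generators of $J_b(i)$, which proves (1).

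For (2), the product $J_b(1)J_b(2)$ is generated as an ideal by the products $e_{a_1}e_{a_2}$ with $0\neq a_1\in\ba{A}_1$ and $0\neq a_2\in\ba{A}_2$. By \zcref{eq:multiplication}, such a product is $e_{a_1\join a_2}$ if $a_1\meet a_2=0$, and $0$ otherwise. Independence says precisely that $a_1\meet a_2>0$ for all nonzero $a_1\in\ba{A}_1$, $a_2\in\ba{A}_2$. Meets in $\ba{B}_b$ agree with those in $\ba{B}$, so independence inside $\ba{B}_b$ is the relevant notion. Hence every generating product vanishes and $J_b(1)J_b(2)=0$.

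The only point needing care is that the subalgebras produced by \zcref{prop:splittingprinciple} are genuine subalgebras of $\ba{B}_b$ with its own top element $b$ and complement $\neg_b$. Such a subalgebra is not a subalgebra of $\ba{B}$, and the ideal $\ideal{b}(\cdot)$ was defined for exactly this situation. One should therefore check that the proof of \zcref{lem:idempotentidealmachine} only uses atomlessness of $\ba{A}$ and the multiplication rule in $\R{\ba{B}}$. It does: disjoint joins in $\ba{A}$ are disjoint joins in $\ba{B}$. I do not expect a real obstacle; the substantive input is entirely contained in \zcref{prop:splittingprinciple}.
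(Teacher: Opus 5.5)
Your proposal is correct and follows essentially the paper's proof: apply \zcref{prop:splittingprinciple} to the atomless algebra $\ba{B}_b$, set $J_b(i)=\ideal{b}(\ba{A}_i)$, and use independence to make products vanish. The only cosmetic difference is where orthogonality is checked: you check it on the generating products $e_{a_1}e_{a_2}$ of the product ideal, whereas the paper checks it on basis elements $e_{x_1},e_{x_2}$ using the upward-closure description of $\ideal{b}(\ba{A}_i)$; both work.
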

    \begin{proof}
        Fix some nonzero $b \in \ba{B}$. By \zcref{lem:atomlessprincipalideal}, $\ba{B}_b$ is atomless, so there exist two independent atomless Boolean subalgebras $\ba{A}_1$ and $\ba{A}_2$ inside $\ba{B}_b$ thanks to \zcref{prop:splittingprinciple}. Applying the construction from \zcref{eq:idealmachine} and using \zcref{lem:idempotentidealmachine} furnishes two idempotent ideals $J_b(i)\coloneqq \ideal{b}(\ba{A}_i)$ in $\R{\ba{B}}$ for $i=1,2$. By construction, they both contain $e_b$, so it remains to verify the orthogonality claim (2).
    
        To this end, consider two basis elements $e_{x_i} \in J_b(i)$. By the description of the basis of $J_b(i)$ given above, this means that there exist $a_i \in \ba{A}_i$ such that $0 < a_i \leq x_i$. Therefore, independence gives
            \[
                0 \neq a_1 \meet a_2 \leq x_1 \meet x_2.
            \]
        The definition of the product in $\R{\ba{B}}$ translates this property into $e_{x_1}\cdot e_{x_2} = 0$, so $J_b(1)$ is orthogonal to $J_b(2)$.
    \end{proof}
    
    \subsection{Prime idempotent ideals}\label{ssec:points}
    
    \begin{Prop}\label{prop:idempoints}
        For any atomless Boolean algebra $\ba{B}$, we have 
            \[
                \pt(\Idem{\R{\ba{B}}}) = \{\mathfrak{m}_{\ba{B}}\},
            \]
        where we identify $\mathfrak{m}_{\ba{B}}$ with the point it represents.
    \end{Prop}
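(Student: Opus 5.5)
By \zcref{lem:points-prime-elements}, points of $\Idem{\R{\ba{B}}}$ correspond to prime elements. So the plan is to show two things: that $\mathfrak{m}_{\ba{B}}$ is a prime element, and that no other idempotent ideal is prime.

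Recall that primality in $\Idem{\R{\ba{B}}}$ is taken with respect to the frame meet $I\wedge J = IJ$ of \zcref{lem:idempotent-frame}. A proper $Q$ is prime if $IJ \subseteq Q$ forces $I\subseteq Q$ or $J \subseteq Q$.

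\emph{$\mathfrak{m}_{\ba{B}}$ is prime.} By \zcref{lem:tangent} and atomlessness, $\mathfrak{m}_{\ba{B}}$ is idempotent. It is proper, and it is the largest proper ideal, since $\R{\ba{B}}$ is local by \zcref{lem:localring}. Hence every proper idempotent ideal is contained in it. If $IJ\subseteq \mathfrak{m}_{\ba{B}}$ with neither $I$ nor $J$ contained in $\mathfrak{m}_{\ba{B}}$, then $I=J=\R{\ba{B}}$ and so $IJ=\R{\ba{B}}$, a contradiction. Thus $\mathfrak{m}_{\ba{B}}$ is prime.

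\emph{No other proper idempotent ideal is prime.} Let $Q$ be a prime element with $Q\neq \mathfrak{m}_{\ba{B}}$. Then $Q \subsetneq \mathfrak{m}_{\ba{B}}$, and I would first reduce to finding a basis element $e_b$ with $b\neq 0$ and $e_b\notin Q$. To see such an element exists, note that $\mathfrak{m}_{\ba{B}}$ is spanned by the $e_b$, $b\ne 0$, by \zcref{eq:maxideal}. If all of these lay in $Q$, we would get $\mathfrak{m}_{\ba{B}}\subseteq Q$.

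Now apply \zcref{lem:orthogonalideals} to this $b$. This produces idempotent ideals $J_b(1), J_b(2)$, both containing $e_b$, with $J_b(1)J_b(2)=0\subseteq Q$. Primality then gives $J_b(i)\subseteq Q$ for some $i$, hence $e_b\in Q$, a contradiction.

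One small point to check is that $Q$ need not be $0$ here. The argument applies equally to $Q=0$: it shows $0$ is not prime, because $\mathfrak{m}_{\ba{B}}\neq 0$ whenever $\ba{B}$ is nontrivial (\zcref{hyp:nontrivial}). Hence the only prime element is $\mathfrak{m}_{\ba{B}}$, and the claimed equality $\pt(\Idem{\R{\ba{B}}})=\{\mathfrak{m}_{\ba{B}}\}$ follows.

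The main obstacle, the construction of orthogonal idempotent ideals through an arbitrary $e_b$, is already handled by \zcref{lem:orthogonalideals}. What remains is essentially bookkeeping. One detail to keep straight is that the meet in this frame is the product of ideals and not their intersection, which is exactly what makes the orthogonality $J_b(1)J_b(2)=0$ decisive.
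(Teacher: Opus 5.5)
Your proposal is correct and follows essentially the same argument as the paper: $\mathfrak{m}_{\ba{B}}$ is prime because $\R{\ba{B}}$ is local, and the orthogonal idempotent ideals $J_b(1),J_b(2)$ from \zcref{lem:orthogonalideals} force every $e_b$ with $b\neq 0$ into any prime element. The only difference is cosmetic: you argue by contradiction from some $e_b\notin Q$, whereas the paper shows directly that $e_b\in\mathfrak{p}$ for all nonzero $b$ and concludes $\mathfrak{m}_{\ba{B}}\subseteq\mathfrak{p}$.
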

    \begin{proof}
        By \zcref{lem:points-prime-elements}, it is enough to show that $\mathfrak m_{\ba B}$ is the unique prime element of $\Idem{\R{\ba B}}$. We first observe that $\mathfrak{m}_{\ba{B}} \in \Idem{\R{\ba{B}}}$ is prime: Let $I,J$ be idempotent ideals with $IJ \subseteq \mathfrak{m}_{\ba{B}}$. If $I \nsubseteq \mathfrak{m}_{\ba{B}}$, it must be $I = \R{\ba{B}}$ by locality (\zcref{lem:localring}), hence $IJ = J \subseteq \mathfrak{m}_{\ba{B}}$. 
    
        Now consider some prime $\mathfrak{p} \in \Idem{\R{\ba{B}}}$ and let $0 \neq b \in \ba{B}$ be some element. Using the second part of \zcref{lem:orthogonalideals}, we get idempotent ideals $J_b(1),J_b(2)$ with $J_b(1)J_b(2) = (0) \subseteq \mathfrak{p}$. Primeness implies that one of them must already be contained in $\mathfrak{p}$, so the first part of \zcref{lem:orthogonalideals} shows $e_b \in \mathfrak{p}$. Since $b$ was an arbitrary nonzero element of $\ba{B}$, we see $\mathfrak{m}_{\ba{B}} \subseteq \mathfrak{p}$, which means they must in fact be equal. In other words, $\mathfrak{m}_{\ba{B}}$ is the unique prime element in the frame $\Idem{\R{\ba{B}}}$.
    \end{proof}
    
    Combining \zcref{rem:atomlessinfinity}, \zcref{lem:idempotentidealmachine2}, and \zcref{prop:idempoints} gives
    
    \begin{Cor}\label{cor:idemnonspatial}
        If $\ba{B}$ is atomless, then $\Idem{\R{\ba{B}}}$ is an infinite frame with a single point, hence cannot be spatial.
    \end{Cor}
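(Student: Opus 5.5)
The plan is to assemble the three cited ingredients, using the characterization of spatiality via points. First, I will show that $\Idem{\R{\ba{B}}}$ is infinite. By \zcref{lem:idempotentidealmachine2}, the map $b \mapsto \ideal{b}(\ba{B}_b)$ is an order embedding $\ba{B} \hookrightarrow \Idem{\R{\ba{B}}}$, so in particular it is injective. By \zcref{rem:atomlessinfinity}, $\ba{B}$ is infinite. Hence $\Idem{\R{\ba{B}}}$ is infinite.

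Second, \zcref{prop:idempoints} gives that $\pt(\Idem{\R{\ba{B}}})$ consists of the single point corresponding to the prime element $\mathfrak m_{\ba B}$, that is, $p(a)=0$ exactly when $a \subseteq \mathfrak m_{\ba B}$.

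Third, I will deduce non-spatiality directly from the definition, by exhibiting $a \nleq b$ that no point separates. Since $\ba B$ is nontrivial by \zcref{hyp:nontrivial}, pick any $0 \ne c \in \ba{B}$. Then $e_c \ne 0$, so $\ideal{c}(\ba{B}_c)$ is a nonzero idempotent ideal; the order embedding already gives $(0) \subsetneq \ideal{c}(\ba B_c)$. Take $a = \ideal{c}(\ba{B}_c)$ and $b = (0)$, so that $a \nleq b$. Every nonzero idempotent ideal is contained in $\mathfrak m_{\ba B}$, since it is proper: the ring is local by \zcref{lem:localring}, and any proper ideal lies in $\mathfrak m_{\ba B}$. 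So the unique point $p$ has $p(a)=0$, and no point separates $a$ from $b$.

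Alternatively, one can argue by counting. A spatial frame is isomorphic to the open-set frame of its space of points, and a one-point space has only two opens, whereas our frame is infinite. Either argument works, and I will present the counting one since it matches the statement's phrasing ("infinite frame with a single point").

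There is no real obstacle here. The only care needed is to apply the order embedding at a nonzero element, which is guaranteed by \zcref{hyp:nontrivial}.
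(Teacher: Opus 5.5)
Your proof is correct and follows the paper's own argument: infinitude of $\Idem{\R{\ba{B}}}$ comes from \zcref{rem:atomlessinfinity} together with the injective order embedding of \zcref{lem:idempotentidealmachine2}, uniqueness of the point comes from \zcref{prop:idempoints}, and non-spatiality follows because a spatial frame $L$ embeds into the subsets of $\pt(L)$ and so has at most two elements when $\pt(L)$ is a singleton. One small slip in your alternative separation argument: not every nonzero idempotent ideal is proper (the unit ideal is one), but the ideal $\ideal{c}(\ba{B}_c)$ you actually use does lie in $\mathfrak{m}_{\ba{B}}$, because its generators $e_x$ with $x \neq 0$ are in $\ker(\varepsilon)$.
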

    
    Putting everything together, we can now prove our main result. 
    \begin{Thm}\label{thm:main-theorem}
        If $\ba{B}$ is an atomless Boolean algebra, then the frame $\Sm(\Der(\R{\ba{B}}))$ of smashing ideals in the derived category of the nil-Cantor ring $\R{\ba{B}}$ is not spatial. 
    \end{Thm}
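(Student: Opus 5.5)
The proof combines three earlier results: \zcref{thm:frame-embedding}, \zcref{lem:subframe-of-spatial-frame}, and \zcref{cor:idemnonspatial}. We argue by contradiction. Suppose $\Sm(\Der(\R{\ba{B}}))$ were spatial. By \zcref{thm:frame-embedding}, applied to the commutative ring $A = \R{\ba{B}}$, the map $\Phi_{\R{\ba{B}}}\colon \Idem{\R{\ba{B}}} \to \Sm(\Der(\R{\ba{B}}))$ is an injective frame homomorphism. Then \zcref{lem:subframe-of-spatial-frame} would force $\Idem{\R{\ba{B}}}$ to be spatial. This contradicts \zcref{cor:idemnonspatial}.

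The key step is therefore the non-spatiality of $\Idem{\R{\ba{B}}}$. It is worth recording why an infinite frame with a single point cannot be spatial. A spatial frame $L$ is isomorphic to $\mathcal{O}(\pt(L))$, via $a \mapsto U_a$, and this map is injective exactly because $L$ has enough points. If $\pt(L) = \{\ast\}$, then $\mathcal{O}(\pt(L))$ has at most two elements. Hence $L$ would have at most two elements. Concretely, the unique point corresponds to the prime element $\mathfrak{m}_{\ba{B}}$ (\zcref{prop:idempoints}). Any two idempotent ideals $I \subsetneq J$ that are both contained in $\mathfrak{m}_{\ba{B}}$ cannot be separated by this point.

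Such a pair exists by \zcref{lem:idempotentidealmachine2}. Since $\ba{B}$ is atomless, it is nontrivial by \zcref{hyp:nontrivial}, and indeed infinite by \zcref{rem:atomlessinfinity}. So we may pick $0 < b < 1$, and then $(0) \subsetneq \ideal{b}(\ba{B}_b) \subsetneq \mathfrak{m}_{\ba{B}}$, which gives at least three distinct idempotent ideals.

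I expect no real obstacle: all the substantive input, namely the frame embedding and the uniqueness of the prime element, is already established. The only care needed is in transporting spatiality along $\Phi_{\R{\ba{B}}}$, which is precisely the content of \zcref{lem:subframe-of-spatial-frame}.
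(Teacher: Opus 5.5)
Your proposal is correct and matches the paper's proof: the paper likewise obtains the theorem from the injective frame homomorphism $\Phi_{\R{\ba{B}}}$ of \zcref{thm:frame-embedding} and from \zcref{cor:idemnonspatial}. It uses \zcref{lem:subframe-of-spatial-frame} implicitly to transfer spatiality, whereas you invoke it explicitly. Your extra argument for why a frame with a single point and at least three elements cannot be spatial is also correct, though the pair $(0) \subsetneq \mathfrak{m}_{\ba{B}}$, which no point separates, already suffices.
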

    \begin{proof}
        This follows from \zcref{thm:frame-embedding,cor:idemnonspatial}. 
    \end{proof}
    \begin{Exa}\label{exa:large}
        \zcref{thm:main-theorem} applies in particular to the ring discussed in \zcref{exa:HSring}. In fact, by \zcref{thm:omegafree}, this is, up to isomorphism, the only example arising from a countable atomless Boolean algebra. There are many more uncountable examples, such as the ones discussed in \zcref{exa:atomlessBA}. In particular, for any uncountable cardinal $\kappa$, the free Boolean algebra $\ba{F}_\kappa$ on $\kappa \neq 0$ generators is atomless, and the associated ring is an uncountable variant of the ring given by Hebestreit and Scholze. 
    \end{Exa}

    \begin{Rem}\label{rem:moregeneralcoefficients}
    As mentioned at the beginning of this section, the main features of nil-Cantor rings generalize to any nonzero coefficient commutative ring $k$ in place of a field. For the purposes of this remark, we will write $\mathfrak{R}_k(\ba{B})$ for this ring. It has the structure of an augmented $k$-algebra, with augmentation
        \[
            \varepsilon\colon \mathfrak{R}_k(\ba{B}) \to k, \quad 
                \varepsilon(e_b) = 
                    \begin{cases}
                        0 & \text{if } b \neq 0; \\
                        1 & \text{if } b=0,
                    \end{cases}
        \]
    and we again write $\mathfrak{p}_{\ba{B}} \coloneqq \ker(\varepsilon)$. Since the $e_b$ still form a $k$-basis, \zcref{lem:tangent,lem:orthogonalideals} and their proofs carry over verbatim, so the argument of \zcref{prop:idempoints} shows that every prime element of $\Idem{\mathfrak{R}_k(\ba{B})}$ contains $\mathfrak{p}_{\ba{B}}$. Moreover, $\varepsilon$ induces an isomorphism between the idempotent ideals containing $\mathfrak{p}_{\ba{B}}$ and $\Idem{k}$, with inverse $J \mapsto J\mathfrak{R}_k(\ba{B}) + \mathfrak{p}_{\ba{B}}$; the latter is idempotent because $\mathfrak{p}_{\ba{B}}^2 = \mathfrak{p}_{\ba{B}}$. It follows that the map
        \[
            \pt(\varepsilon_*)\colon \pt(\Idem{k})  \xlongrightarrow{\sim} \pt(\Idem{\mathfrak{R}_k(\ba{B})})
        \]
    is a bijection. In particular, no point distinguishes the nonzero idempotent ideal $\mathfrak{p}_{\ba{B}}$ from $0$. We conclude that, for any atomless Boolean algebra $\ba{B}$ and any nonzero commutative ring $k$, the frames $\Idem{\mathfrak{R}_k(\ba{B})}$ and hence also $\Sm(\Der(\mathfrak{R}_k(\ba{B})))$ are not spatial.
\end{Rem}
    
    \begin{Rem}\label{rem:pBoolean}
        Variations of the construction of $\R{\ba{B}}$ give rise to more examples of rings whose frame of idempotent ideals is not spatial. For instance, fix a prime $p$ and let $\ba{B}$ be a finite Boolean algebra. Write $\mathcal{X}^{(p)}(\ba{B}) \coloneqq \Hom(\At(\ba{B}),\{0,\ldots, p-1\})$ for the set of functions on the atoms of $\ba{B}$ with values in $\{0,\ldots, p-1\}$ equipped with the natural partial addition. We then define a commutative $k$-algebra with underlying $k$-vector space
            \[
                \mathfrak{R}^{(p)}(\ba{B}) \coloneqq \bigoplus_{\chi \in \mathcal{X}^{(p)}(\ba{B})}ke_{\chi}
            \]
        and multiplication
            \[
                e_{\chi}\cdot e_{\psi} = 
                \begin{cases}
                    e_{\chi + \psi} & \text{if } \chi(a) + \psi(a) < p \text{ for all } a \in \At(\ba{B}); \\
                    0 & \text{otherwise}.
                \end{cases}
            \]
        As before, we can verify that this indeed defines the structure of a commutative $k$-algebra on $\mathfrak{R}^{(p)}(\ba{B})$. If $\lvert \At(\ba{B})\rvert = n$, then there is an isomorphism 
            \[
                k[x_1,\ldots,x_n]/(x_1^p,\ldots,x_n^p) \xlongrightarrow{\sim} \mathfrak{R}^{(p)}(\ba{B}),
            \]
        induced by sending $x_a$ to $e_{\delta_a}$, where $\delta_a$ denotes the delta-function on the atom $a \in \At(\ba{B})$. With this presentation, an injective map of finite Boolean algebras $f\colon \ba{B}_1 \to \ba{B}_2$ induces a $k$-algebra homomorphism 
            \[
                \mathfrak{R}^{(p)}(f)\colon \mathfrak{R}^{(p)}(\ba{B}_1) \to \mathfrak{R}^{(p)}(\ba{B}_2), \quad x_{a_1} \mapsto \prod_{a_2 \in \At(\ba{B}_2),\, a_2 \leq f(a_1)} x_{a_2}.
            \]
        Finally, writing any Boolean algebra $\ba{B} = \bigcup_{i \in I}\ba{B}_i$ as a union of its finite Boolean subalgebras, we define
            \[
                \mathfrak{R}^{(p)}(\ba{B}) \coloneqq \colim_{i \in I} \mathfrak{R}^{(p)}(\ba{B}_i).
            \]
        When $p=2$, this construction coincides with the one in \zcref{ssec:construction}: $\mathfrak{R}^{(2)}(\ba{B}) \cong \mathfrak{R}(\ba{B})$. For atomless Boolean algebras $\ba{B}$, a modification of the proof above shows that $\Idem{\mathfrak{R}^{(p)}(\ba{B})}$ is not spatial, where instead we need a $p$-fold product of ideals in \zcref{lem:orthogonalideals}. In the special case of $\ba{B} = \ba{F}_{\omega}$ and $k = \bbF_p$, the ring $\mathfrak{R}^{(p)}(\ba{F}_{\omega})$ identifies with the ring considered by Hebestreit and Scholze (\zcref{exa:HSring}) for arbitrary $p$.
    \end{Rem}
    \begin{Rem}
        The nil-Cantor rings $\R{\ba{B}}$ for any atomless Boolean algebra $\ba{B}$ also give new examples of rings which fail the telescope conjecture. Indeed, compactly generated localizing $\otimes$-ideals are in bijection with thick $\otimes$-ideals, and by Thomason's theorem, the latter are classified by Thomason subsets of $\Spec(\R{\ba{B}}) = \{ \mathfrak m_{\ba{B}} \}$, see \zcref{rem:localringalt}. In other words, there are exactly two compactly generated $\otimes$-ideals, namely $0$ and $\Der(\R{\ba{B}})$. On the other hand, $\Phi_{\R{\ba{B}}}(\mathfrak m_{\ba{B}})$ is a nonzero proper smashing ideal, since $0 < \mathfrak m_{\ba{B}} < \R{\ba{B}}$ and $\Phi_{\R{\ba{B}}}$ is injective. Hence this smashing ideal is not compactly generated. 
        
        In fact, in light of \zcref{exa:large}, \zcref{rem:localringalt}, \zcref{thm:frame-embedding}, and \zcref{lem:idempotentidealmachine2}, the commutative ring $\R{\ba{F}_{\kappa}}$ satisfies
            \[
                \lvert\mathrm{Thick}_{\otimes}(\Der(\R{\ba{F}_{\kappa}})^{\omega})\rvert = 2 \quad \text{and} \quad \lvert \Sm(\Der(\R{\ba{F}_{\kappa}})) \rvert \geq \kappa
            \]
        for any infinite cardinal $\kappa$. In this sense, the discrepancy measuring the failure of the telescope conjecture can be arbitrarily large.
    \end{Rem}
    \printbibliography
    \end{document}